\documentclass[12pt]{amsart}
\usepackage{amsmath,amsfonts,amssymb,amsthm,enumerate}

\setlength{\topmargin}{-1cm} 
\setlength{\oddsidemargin}{0cm}
\setlength{\evensidemargin}{0cm} 
\setlength{\textheight}{23cm}
\setlength{\textwidth}{16cm}

\makeatother
\numberwithin{equation}{section}

\newcommand{\R}{{\mathbb R}}
\newcommand{\C}{{\mathbb C}}
\newcommand{\Z}{{\mathbb Z}}

\newcommand{\Fg}{{\mathfrak g}}
\newcommand{\1}{{\bf 1}}
\newcommand{\ad}{{\rm ad}}
\newcommand{\Tr}{{\rm Tr}}
\newcommand{\Sn}{{\mathcal S}}
\newcommand{\Aut}{{\rm Aut\ }}
\newcommand{\h}{{\mathbf h}}

\newtheorem{thm}{Theorem}[section]
\newtheorem{prop}[thm]{Proposition}
\newtheorem{lem}[thm]{Lemma}
\newtheorem{cor}[thm]{Corollary}

\newtheorem{rem}[thm]{Remark}
\theoremstyle{definition}
\newtheorem{defn}[thm]{Definition}
\newtheorem{note}[thm]{Note}

\makeatletter
\@addtoreset{equation}{section}

\makeatother

\pagestyle{plain}

\begin{document}

\title[Classification of VOAs of class $\Sn^4$]{Classification of vertex operator algebras of class $\Sn^4$ with minimal conformal weight one}

\author{Hiroyuki \textsc{Maruoka}}
\address[H. Maruoka]{Department of neurology, Nissan Tamagawa Hospital,
Seta 4-8-1, Tokyo 158-0095, Japan}


\author{Atsushi \textsc{Matsuo}}
\address[A. Matsuo]{Graduate School of Mathematical Sciences, The University of Tokyo,
Komaba, Tokyo 153-8914, Japan}
\email{matsuo@ms.u-tokyo.ac.jp}

\author{Hiroki \textsc{Shimakura}}
\address[H. Shimakura]{Research Center for Pure and Applied Mathematics,
Graduate School of Information Sciences, Tohoku University,
Sendai 980-8579, Japan}
\email{shimakura@m.tohoku.ac.jp}

\thanks{The first author had been a graduate student at Graduate school of Mathematical Science, the University of Tokyo in 2001--2003.}
\thanks{The second author was partially supported by JSPS KAKENHI Grant Number 26610004.}
\thanks{The third author was partially supported by JSPS KAKENHI Grant Numbers 23540013 and 26800001, and by Grant for Basic Science Research Projects from The Sumitomo Foundation.}

\subjclass[2010]{Primary 17B69}

\keywords{Vertex operator algebra, trace formula, Deligne's exceptional Lie algebras}

\begin{abstract}
In this article, we describe the trace formulae of composition of several (up to four) adjoint actions of elements of the Lie algebra of a vertex operator algebra by using the Casimir elements.
As an application, we give constraints on the central charge and the dimension of the Lie algebra for vertex operator algebras of class $\Sn^4$.
In addition, we classify vertex operator algebras of class $\Sn^4$ with minimal conformal weight one under some assumptions.
\end{abstract}
\maketitle

\section*{Introduction}
The notion ``of class $\Sn^n$" was introduced in \cite{ma} as follows: a vertex operator algebra (VOA) is said to be of class $\Sn^n$ if its trivial components with respect to the full automorphism group coincide with the subVOA $V_\omega$ generated by the conformal vector $\omega$ up to degree $n$.
For example, the moonshine VOA is of class $\Sn^{11}$.
Conversely, if a VOA $V$ of central charge $c$  with minimal conformal weight $2$ is of class $\Sn^8$, then $c=24$ and $\dim V^2=196884$ (\cite{ma,Ho08}), which are satisfied by the moonshine VOA.
For VOAs of class $\Sn^6$, there are constraints on the central charge and the dimension of the minimal conformal weight space (see \cite[Table 3.3]{ma}, \cite[Theorem 4.1]{Ho08} and \cite[\S 6]{TV}).
In particular, VOAs of class $\Sn^6$ with minimal conformal weight one are lattice VOAs associated to the root lattices of type $A_1$ and $E_8$ (\cite{Ho08,T2}).

\medskip

In \cite{ma}, several properties of a VOA $V$ of class $\Sn^n$ were investigated, and as their application, trace formulae of composition of several (up to five) adjoint actions of elements of the Griess algebra were described.

One property is that for any $v\in V^{i}$ $(i\le n)$ and any $m\in\Z$, the traces of $o(v)$ and $o(\pi(v))$ on $V^m$ are the same, where $o(v)$ is the weight preserving operation of $v$ and $\pi$ is the projection from $V$ to the subVOA $V_\omega$.
This property was studied in \cite{Ho08} as conformal $n$-designs, which are analogues of block designs and spherical designs.

Another property is that the $i$-th Casimir element belongs to the subVOA $V_\omega$ if $i\le n$.
By using this property and genus zero correlation functions, constrains on the central charge and the dimension of the minimal conformal weight space were studied in \cite{T1}.
In addition, it was shown in \cite{T2} that if the minimal conformal weight of $V$ is one and the $4$-th Casimir element belongs to $V_\omega$, then $V$ is isomorphic to one of the simple affine VOAs associated with Deligne's exceptional Lie algebras $A_1$, $A_2$, $G_2$, $D_4$, $F_4$, $E_6$, $E_7$ and $E_8$ (\cite{De}) at level $1$ by using modular differential equations.

\medskip

In this article, we describe the trace formulae of composition of several (up to four) adjoint actions of elements of the Lie algebra of a VOA by using the Casimir elements, along with the method in \cite{ma}.
By the cyclic property of trace, we obtain the same constraints as in \cite{T1} on the central charge and the dimension of the Lie algebra of a VOA of class $\Sn^4$.
We show that possible $C_2$-cofinite simple VOAs of class $\Sn^4$ are the simple affine VOAs associated with Deligne's exceptional Lie algebras at level $1$ by using representation theory for simple affine VOAs (\cite{FZ}).
Conversely, we prove that such VOAs are of class $\Sn^4$ in the following way:
If the type of the Lie algebra is $A_n$, $D_n$ or $E_n$, then the graded dimension of the trivial components with respect to the inner automorphism group was calculated in \cite{bt}, which proves the assertion for the Lie algebras of type $A_1$, $E_6$, $E_7$ and $E_8$.
In addition, considering Dynkin diagram automorphisms, we prove that if the type of the Lie algebra is $A_2$ or $D_4$, then the associated simple affine VOA at level $1$ is of class $\Sn^4$.
For the remaining Lie algebras of type $G_2$ and $F_4$, we calculate the graded dimension of the trivial components, which is described in terms of string functions (\cite{ka,kp}).
The main theorem is the following:

\setcounter{section}{5}
\setcounter{thm}{6}
\begin{thm} Let $V$ be a $C_2$-cofinite, simple VOA of CFT-type with non-zero invariant bilinear form.
Assume that $V^1\neq0$ and that the central charge of $V$ is neither $0$, $-22/5$ nor $\dim V^1$.
Then $V$ is of class $\Sn^4$ under $\Aut V$ if and only if $V$ is isomorphic to one of the simple affine VOAs associated with the simple Lie algebras of type $A_1,A_2,G_2,D_4,F_4,E_6,E_7,E_8$ at level $1$.
\end{thm}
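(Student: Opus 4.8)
The plan is to prove the two implications separately. Throughout, write $\mathfrak{g}:=V^1$; since $V$ is simple, of CFT-type, and carries a non-zero invariant bilinear form, $\mathfrak{g}$ is a reductive Lie algebra under the bracket $x_{(0)}y$, the restriction of the invariant form to $\mathfrak{g}$ is non-degenerate and invariant, and each ideal of $\mathfrak{g}$ generates an affine subVOA at a definite level whose Sugawara vector is comparable with $\omega$. The hypothesis $V^1\neq0$ guarantees $\mathfrak{g}\neq0$.

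For the forward implication, assume $V$ is of class $\Sn^4$ under $\Aut V$. Then the $i$-th Casimir element lies in $V_\omega$ for all $i\le4$, so by the trace formulae established above the traces of $\ad(x)\ad(y)$ and of the degree-four compositions of adjoint operators on $\mathfrak{g}$ agree with the values obtained by projecting to $V_\omega$. The quadratic identity pins down the level in terms of $c$ and $\dim\mathfrak{g}$, while the quartic identity amounts to the requirement that the fourth-order Casimir invariant be a scalar multiple of the square of the quadratic one. The hypotheses that $c$ is none of $0$, $-22/5$, $\dim V^1$ remove exactly the degenerate branches of these equations (the two zeros of the factor $c(5c+22)$ and the abelian free-field case $c=\dim\mathfrak{g}$); on the remaining locus $\mathfrak{g}$ is forced to be simple and to satisfy Deligne's dimension relation $\dim\mathfrak{g}=c(5c+22)/(10-c)$ at level one, whose only solutions are the eight types $A_1,A_2,G_2,D_4,F_4,E_6,E_7,E_8$. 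Since the central charge of $V$ then equals the Sugawara central charge of the level-one affine VOA $L_1(\mathfrak{g})$ generated by $\mathfrak{g}$, the coset has central charge zero and $\omega$ is the Sugawara vector, so $V$ is an integrable level-one $\hat{\mathfrak{g}}$-module. Using the Frenkel--Zhu classification of modules over the simple affine VOA (\cite{FZ}) together with $C_2$-cofiniteness, simplicity, and the normalization $V^0=\C\1$, I would rule out any proper extension of the vacuum module and conclude $V\cong L_1(\mathfrak{g})$.

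For the converse implication, I must show that each $L_1(\mathfrak{g})$ above is of class $\Sn^4$, that is, $(V^i)^{\Aut V}=V_\omega^i$ for $i\le4$. As every automorphism fixes $\omega$, the inclusion $V_\omega^i\subseteq(V^i)^{\Aut V}$ is automatic, so it suffices to bound the invariants from above by computing the graded dimension of the trivial $\mathfrak{g}$-isotypic component through degree four. For the types $A_1,E_6,E_7,E_8$ the graded dimension of the inner-automorphism invariants computed in \cite{bt} already coincides with $\dim V_\omega^i$ in these degrees, forcing equality. For $A_2$ and $D_4$ the inner invariants are strictly larger, and I would invoke the Dynkin diagram automorphisms---the involution for $A_2$ and triality for $D_4$, both contained in $\Aut V$---to eliminate the surviving invariants. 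For the non-simply-laced types $G_2$ and $F_4$, which lie outside the scope of \cite{bt} and admit no diagram automorphism, I would compute the graded dimension of the trivial component directly from the string functions of \cite{ka,kp} and verify degree by degree that it equals $\dim V_\omega^i$.

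The main obstacle I anticipate is twofold. In the forward implication, excluding proper extensions of $L_1(\mathfrak{g})$ using only $C_2$-cofiniteness and Frenkel--Zhu theory---rather than the modular differential equations of \cite{T2}---demands a careful analysis of which level-one integrable modules can appear and why none of them contributes spurious low-weight states. In the converse implication, the genuinely computational heart is the $G_2$ and $F_4$ cases: extracting the trivial-component multiplicities in degrees $\le4$ from string functions and matching them against $\dim V_\omega^i$ is where the delicate bookkeeping lies, together with the verification that the outer automorphisms for $A_2$ and $D_4$ remove precisely the excess inner invariants.
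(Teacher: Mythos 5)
Your proposal follows essentially the same route as the paper's proof: the forward direction uses the trace formulae (via $\kappa_i\in V_\omega$ for $i\le 4$) to derive the relation $d=c(22+5c)/(10-c)$, forces $V^1$ to be simple of one of the eight types at level one, identifies $\omega$ with the Sugawara vector of the subVOA generated by $V^1$, and then applies Frenkel--Zhu theory plus rationality to rule out nontrivial module summands (the paper does this by noting every irreducible level-one module has lowest weight a non-negative rational less than $1$, so a nonzero summand would either break the integer grading or violate $\dim V^0=1$); the converse is handled exactly as you outline, with Baker--Tamanoi invariants for $A_1,E_6,E_7,E_8$, diagram automorphisms (in the lattice VOA realization) for $A_2,D_4$, and string-function computations for $G_2,F_4$. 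The two obstacles you flag are precisely the computations carried out in the paper's Theorem 4.3 and Propositions 5.2 and 5.5.
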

\setcounter{section}{0}
\setcounter{thm}{0}

\medskip

The organization of this article is as follows:
In \S 1, we recall some definitions and notation about VOAs.
In \S 2, we recall the definitions of VOAs of class $\Sn^n$ and the Casimir elements, and review related results.
In \S 3, we express the trace formulae for the adjoint action of the Lie algebra of a VOA.
In \S 4, as an application of the trace formulae, we give constraints on the central charge and the dimension of the Lie algebra for a VOA of class $\Sn^4$ with minimal conformal weight one.
In addition, we discuss possible VOA structures.
In \S 5, we prove that the simple affine VOAs associated with Deligne's exceptional Lie algebras at level $1$ are of class $\Sn^4$.
In Appendix A, we prove the quartic trace formula, and in Appendix B, we sketch another proof.

\section{Preliminary}

In this article, the notation and terminology follow \cite{man}.
Let $V=\bigoplus_{i=0}^{\infty}V^i$ be a vertex operator algebra (VOA).
It is a vector space over $\C$ equipped with a linear map $Y:V\to({\rm End}\ V)[[z,z^{-1}]]$,
\[
Y(v,z)=\sum_{i\in {\mathbb Z}}v_{(i)}z^{-i-1}\qquad \text{for\ } v\in V,
\]
and the vacuum vector $\1\in V^0$ and the conformal vector $\omega\in V^2$ satisfying some conditions (see \cite[\S 7.4]{man} for details).
The graded dimension $\dim_*(V,q)$ of $V$ is the formal series defined by $$\dim_*(V,q)=\sum_{i=0}^\infty \dim V^i q^i.$$

The conformal vector $\omega$ generates a representation of the Virasoro algebra:
\begin{equation*}
[L(m),L(n)] =(m-n)L({m+n})+\frac{m^3-m}{12}\delta_{m+n,0}c\ {\rm Id}_V,\label{Eq:Vir}
\end{equation*}
where $L(m)=\omega_{(m+1)}$, $c\in\C$ is the {\it central charge} of $V$ and ${\rm Id}_V$ is the identity operator on $V$.
Let $V_\omega$ denote the subVOA of $V$ generated by the conformal vector $\omega$.
A {\it singular vector} of a $V_\omega$-module is a nonzero vector $v$ of the module such that $L(m)v=0$ for all $m\ge1$ and $v\notin\C\1$.
The {\it minimal conformal weight} of $V$ is defined by $\min\{i\in\Z\mid V^i\neq V_\omega^i\}$.

In this article, we always assume that a VOA $V$ is {\it of CFT-type}, that is, the conformal weight $0$ space $V^0$ is spanned by the vacuum vector $\1$.
By this assumption, the minimal conformal weight of $V$ is $1$ if and only if $V^1\neq 0$.
In addition, the conformal weight one space $V^{1}$ of $V$ carries a Lie algebra 
structure with Lie bracket $[\cdot,\cdot]$ given by $[a,b]=a_{(0)}b$ for $a,\,b \in V^{1}$.

We also always assume that $V$ carries a non-degenerate invariant bilinear form $(\cdot|\cdot)$ in the sense of \cite{FHL}.
It was shown in \cite[Theorem 3.1]{Li} that there is a linear isomorphism between $V^0/L(1)V^1$ and the space of invariant bilinear forms on $V$.
It follows from $\dim V^0=1$ and the existence of the invariant form $(\cdot|\cdot)$ that $\dim V^0/L(1)V^1=1$, equivalently $L(1)V^1=0$.
In particular, an invariant bilinear form on $V$ is uniquely determined up to scalars.
We normalize the form $(\cdot|\cdot)$ so that $(\1|\1)=-1$.
Then $(a|b)\1=a_{(1)}b$ for $a,b\in V^1$.
We note that the invariance property of the form $(\cdot|\cdot)$ on $V$ implies that the restriction of $(\cdot|\cdot)$ to the Lie algebra $V^1$ is invariant, that is, $(a_{(0)}b|c)=(a|b_{(0)}c)$ for all $a,b,c\in V^1$.
We also note that if $V$ is simple, then a non-zero invariant bilinear form is non-degenerate on $V$ (and on $V^1$).

An {\it automorphism} of a VOA $V$ is a linear isomorphism $g:V\to V$ satisfying $g(a_{(i)}b)=(ga)_{(i)}(gb)$ for all $a,b\in V$ and all $i\in\Z$ that fixes the conformal vector $\omega$.
Let $\Aut V$ denote the group of all automorphisms of $V$.
For $a\in V^1$, $\exp(a_{(0)})$ is an automorphism of $V$.
Let $N(V)$ denote the subgroup of $\Aut V$ defined by $N(V)=\langle\exp(a_{(0)})|\ a\in V^1\rangle$.
Clearly, $N(V)$ is normal in $\Aut V$, and it acts on $V^1$ as the inner automorphism group of the Lie algebra $V^1$.
For a subgroup $H$ of $\Aut V$, let $V^H$ denote the set of fixed points of $H$ on $V$, that is, $V^H=\{v\in V\mid g(v)=v\ \text{for all}\ g\in H\}$.
Note that $V^H$ is a subVOA.

A VOA $V$ is said to be {\it $C_2$-cofinite} if $\dim V/\langle u_{(-2)}v\mid u,v\in V\rangle_{\C} <\infty$.

\section{VOA of class $\Sn^n$ and the Casimir elements}
In this section, we recall the definitions of VOAs of class $\Sn^n$ and the Casimir elements, and review related results from \cite{ma,T1}.

Let $V$ be a VOA of CFT-type with non-degenerate invariant bilinear form.
\begin{defn}(\cite[Definition 1.1]{ma}) Let $H$ be a subgroup of $\Aut V$.
A VOA $V$ is said to be {\it of class $\Sn^n$ under $H$} if $(V^H)^i=V_\omega^i$ for $0\le i\le n$.
\end{defn}

Let $d$ be the dimension of $V^1$.
Assume that $d>0$.
Let $\{x^1,x^2,\dots,x^d\}$ be a basis of $V^1$ and let $\{x_1,x_2,\dots,x_d\}$ be its dual basis with respect to the invariant form.
For a non-negative integer $i$, the {\it $i$-th Casimir element} $\kappa_i$ of $V$ is defined by:
\[
\ \kappa_i=\sum_{j=1}^dx^j_{(1-i)}x_j\in V^i.
\]  
Note that $\kappa_0=d\1$ and that $\kappa_i$ is independent of the basis chosen for $V^1$.
Since $\Aut V$ preserves the invariant form, we have $g(\kappa_i)=\kappa_i$ for all $g\in \Aut V$.
The following lemma is clear:
\begin{lem}
If $V$ is of class $\Sn^n$ under (a subgroup of) $\Aut V$, then $\kappa_i\in V_\omega^i$ for $0\le i\le n$.
\end{lem}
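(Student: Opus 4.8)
The plan is to derive the conclusion directly from two facts that are already on the table: the $\Aut V$-invariance of the Casimir elements (recorded immediately before the lemma) and the defining property of the class $\Sn^n$ condition (Definition 2.1). No genuinely new computation is needed; the argument is a matter of assembling these in the right order and tracking the grading.

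First I would fix the subgroup $H\le\Aut V$ under which $V$ is assumed to be of class $\Sn^n$, and recall that by construction $\kappa_i\in V^i$ for each $i$. The key input is the already-noted identity $g(\kappa_i)=\kappa_i$ for all $g\in\Aut V$, which holds because any automorphism preserves the invariant bilinear form and therefore carries a basis of $V^1$ and its dual basis to another such dual pair, leaving the basis-independent element $\kappa_i$ unchanged. Since this invariance holds for the \emph{entire} group $\Aut V$, it holds a fortiori for every element of the subgroup $H$. Hence $\kappa_i$ is fixed by $H$, and combining this with $\kappa_i\in V^i$ places it in the graded piece $(V^H)^i$ of the fixed-point subVOA $V^H$.

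It then remains only to invoke the hypothesis. By Definition 2.1, the assertion that $V$ is of class $\Sn^n$ under $H$ means precisely that $(V^H)^i=V_\omega^i$ for all $0\le i\le n$. Feeding in the containment just established, for each such $i$ we obtain $\kappa_i\in(V^H)^i=V_\omega^i$, which is exactly the claim. The main point worth stating explicitly—rather than a true obstacle—is that invariance under the full automorphism group automatically supplies invariance under the possibly proper subgroup $H$ appearing in the definition, so the argument goes through uniformly without any case-by-case treatment of the subgroup. This is why the text can reasonably call the lemma \emph{clear}.
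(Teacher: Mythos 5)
Your proof is correct and is exactly the argument the paper has in mind: it records the invariance $g(\kappa_i)=\kappa_i$ for all $g\in\Aut V$ immediately before the lemma and then declares the lemma ``clear,'' the implicit reasoning being precisely your chain $\kappa_i\in(V^H)^i=V_\omega^i$ for $0\le i\le n$. You have simply written out the details the authors omitted, including the correct observation that invariance under all of $\Aut V$ restricts to any subgroup $H$.
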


The following lemma is immediate from the commutator formula (\cite[(4.3.2)]{man}).

\begin{lem}\label{PCE}{\rm (\cite[(5.1)]{T1} (cf.\ \cite[(2.7)]{ma}))}
For every positive integer $m$, $$L(m)\kappa_{i}=(i-1)\kappa_{i-m}.$$
\end{lem}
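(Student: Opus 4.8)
The plan is to compute the commutator $[L(m),x^j_{(1-i)}]$ via the commutator formula \cite[(4.3.2)]{man} and then apply it term-by-term to the defining sum for $\kappa_i$. First I would write $L(m)=\omega_{(m+1)}$ and expand, for a positive integer $m$,
$$[\omega_{(m+1)},x^j_{(1-i)}]=\sum_{k\ge 0}\binom{m+1}{k}\,(\omega_{(k)}x^j)_{(m+2-i-k)}.$$
Since $x^j\in V^1$, the vector $\omega_{(k)}x^j=L(k-1)x^j$ lies in $V^{2-k}$ and hence vanishes for $k\ge 3$; moreover $\omega_{(2)}x^j=L(1)x^j=0$ by the standing assumption $L(1)V^1=0$. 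Thus only the terms $k=0$ and $k=1$ survive, giving $\omega_{(0)}x^j=L(-1)x^j$ and $\omega_{(1)}x^j=L(0)x^j=x^j$.

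Next I would invoke the $L(-1)$-derivative property $(L(-1)v)_{(n)}=-n\,v_{(n-1)}$ to rewrite the $k=0$ contribution as $(L(-1)x^j)_{(m+2-i)}=-(m+2-i)\,x^j_{(m+1-i)}$. Combining this with the $k=1$ contribution $\binom{m+1}{1}x^j_{(m+1-i)}=(m+1)x^j_{(m+1-i)}$, the two terms collapse to
$$[L(m),x^j_{(1-i)}]=\big(-(m+2-i)+(m+1)\big)\,x^j_{(m+1-i)}=(i-1)\,x^j_{(m+1-i)}.$$

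Finally I would apply $L(m)$ to $\kappa_i=\sum_j x^j_{(1-i)}x_j$ and distribute it using the commutator just obtained, writing $L(m)\kappa_i=\sum_j [L(m),x^j_{(1-i)}]x_j+\sum_j x^j_{(1-i)}L(m)x_j$. The second sum vanishes: for a positive integer $m$ and $x_j\in V^1$ we have $L(m)x_j\in V^{1-m}$, which is $0$ when $m\ge 2$ and equals $L(1)x_j=0$ when $m=1$. Substituting the commutator into the first sum yields $L(m)\kappa_i=(i-1)\sum_j x^j_{(m+1-i)}x_j$, and recognizing $\sum_j x^j_{(m+1-i)}x_j=\sum_j x^j_{(1-(i-m))}x_j=\kappa_{i-m}$ directly from the definition of the Casimir elements completes the proof.

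The only point requiring care is the bookkeeping of the two surviving terms in the commutator formula together with the sign coming from the $L(-1)$-derivative property; once these are handled the identity is immediate. No convergence or basis-independence subtleties arise, since all manipulations are purely algebraic and $\kappa_{i-m}$ is formed with the same basis $\{x^j\}$ and dual basis $\{x_j\}$ used for $\kappa_i$.
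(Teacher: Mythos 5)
Your proof is correct and is essentially the paper's own argument: the paper dismisses the lemma as ``immediate from the commutator formula \cite[(4.3.2)]{man}'', and your computation of $[L(m),x^j_{(1-i)}]=(i-1)x^j_{(m+1-i)}$ (using $L(1)V^1=0$ from the standing assumptions and the $L(-1)$-derivative property) together with the vanishing of $\sum_j x^j_{(1-i)}L(m)x_j$ is exactly the expansion being left to the reader. No gaps; the bookkeeping, including the case $m=1$ where $L(1)x_j=0$ is needed, is handled correctly.
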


\begin{rem}
If $\kappa_n\in V_\omega^n$, then $\kappa_i\in V_\omega^i$ for all $0\le i\le n$.
\end{rem}

Let $c$ be the central charge of $V$.
By Lemma \ref{PCE}, we obtain the following:

\begin{lem}\label{Lem:k234} {\rm (\cite[\S 5.1]{T1})} Let $n\in\{2,3,4\}$.
Assume that $V_\omega$ has no singular vectors, up to degree $n$.
If $\kappa_n\in V_\omega^n$, then the explicit expressions of $\kappa_i$ for $0\le i\le n$ are given as follows:
\begin{align*}
&\kappa_0=d\1,\quad \kappa_1=0,\quad \kappa_2 =\frac{2d}{c}L(-2)\1\left(=\frac{2d}{c}\omega\right),\quad \kappa_3=\frac{d}{c}L(-3)\1,\\ 
&\kappa_4 =\frac{3d}{c(5c+22)}\left(4L(-2)^2\1+(c+2)L(-4)\1\right).
\end{align*}
\end{lem}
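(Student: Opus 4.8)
The plan is to determine each $\kappa_i$ by first locating it inside a known basis of $V_\omega^i$ and then pinning down the coefficients via the lowering relations of Lemma \ref{PCE}. The hypothesis $\kappa_n\in V_\omega^n$ together with the preceding Remark gives $\kappa_i\in V_\omega^i$ for all $0\le i\le n$, so it suffices to work inside $V_\omega$. Since $V_\omega$ has no singular vectors up to degree $n$, its graded pieces up to degree $n$ coincide with those of the universal Virasoro vacuum module and hence carry the PBW-type basis $\{L(-m_1)\cdots L(-m_k)\1 : m_1\ge\cdots\ge m_k\ge 2\}$ in each degree. Explicitly $V_\omega^0=\C\1$, $V_\omega^1=0$, $V_\omega^2=\C L(-2)\1$, $V_\omega^3=\C L(-3)\1$, and $V_\omega^4=\C L(-4)\1\oplus\C L(-2)^2\1$. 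In particular $\kappa_0=d\1$ is the stated value, and $\kappa_1\in V_\omega^1=0$.

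For $\kappa_2$ and $\kappa_3$ the target space is one-dimensional, so I write $\kappa_2=\alpha L(-2)\1$ and $\kappa_3=\beta L(-3)\1$ and determine the scalars by applying $L(2)$ and $L(3)$. Lemma \ref{PCE} gives $L(2)\kappa_2=\kappa_0=d\1$ and $L(3)\kappa_3=2\kappa_0=2d\1$, while the Virasoro relations together with $L(m)\1=0$ for $m\ge-1$ yield $L(2)L(-2)\1=(c/2)\1$ and $L(3)L(-3)\1=2c\1$. Comparing coefficients gives $\alpha=2d/c$ and $\beta=d/c$, the claimed expressions.

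The substantive case is $\kappa_4$. I write $\kappa_4=\gamma L(-4)\1+\delta L(-2)^2\1$ and extract two linear equations for $(\gamma,\delta)$ by applying $L(4)$ and $L(2)$. By Lemma \ref{PCE}, $L(4)\kappa_4=3\kappa_0=3d\1$ and $L(2)\kappa_4=3\kappa_2=(6d/c)L(-2)\1$, and the right-hand actions are computed from the commutation relations: $L(4)L(-4)\1=5c\1$, $L(4)L(-2)^2\1=3c\1$, $L(2)L(-4)\1=6L(-2)\1$, and $L(2)L(-2)^2\1=(c+8)L(-2)\1$. This yields the system $5c\gamma+3c\delta=3d$ and $6\gamma+(c+8)\delta=6d/c$, whose coefficient determinant equals $c(5c+22)$. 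Solving gives $\delta=12d/(c(5c+22))$ and $\gamma=3d(c+2)/(c(5c+22))$, which rearrange to the stated formula for $\kappa_4$.

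The main, and essentially only, obstacle is the solvability of this last system, i.e.\ the nonvanishing of its determinant $c(5c+22)$. This is exactly where the no-singular-vector hypothesis is used: degeneracy of the lowering maps on $V_\omega^i$ is equivalent to the presence of a singular vector, and vanishing of $c$ or of $5c+22$ produces singular vectors at degrees $2$ and $4$ respectively. (Concretely, $L(2)L(-2)\1=(c/2)\1$ shows $L(-2)\1$ is singular precisely when $c=0$, and the degree-$4$ Shapovalov form degenerates precisely when $5c+22=0$.) Granting that the assumption excludes both, the systems are nonsingular and the coefficients are uniquely determined, completing the proof.
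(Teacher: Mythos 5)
Your proposal is correct and follows essentially the same route the paper intends: the paper derives the lemma from Lemma \ref{PCE} (citing \cite[\S 5.1]{T1}), i.e.\ one writes each $\kappa_i$ in the PBW basis of the universal vacuum module, which is available up to degree $n$ by the no-singular-vector hypothesis, and solves for the coefficients using $L(m)\kappa_i=(i-1)\kappa_{i-m}$. Your Virasoro computations ($L(2)L(-2)\1=(c/2)\1$, $L(3)L(-3)\1=2c\1$, $L(4)L(-4)\1=5c\1$, $L(4)L(-2)^2\1=3c\1$, $L(2)L(-4)\1=6L(-2)\1$, $L(2)L(-2)^2\1=(c+8)L(-2)\1$) and the resulting determinant $c(5c+22)$ all check out, reproducing the stated coefficients exactly.
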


\begin{rem}\label{Rch}
\begin{enumerate}[{\rm (1)}]
\item By \cite{Wa}, if $V_\omega$ contains a singular vector of degree $n$, then the central charge of $V$ is equal to $$1-\frac{6(p-q)^2}{pq}$$ for some $p,q\in\{2,3,\dots\}$ satisfying $(p,q)=1$, $p,q\ge2$ and $(p-1)(q-1)= n$.
In particular, if $c\neq -22/5,0$ then $V_\omega$ has no singular vectors, up to degree $5$.
\item If $V_\omega$ has no singular vectors, then 
$$\dim_*(V_\omega,q)=\frac{1}{\prod_{i\in\Z_{\ge2}}(1-q^i)}=1 + q^2 + q^3 + 2q^4 + 2q^5 + 4q^6 + 4q^7 + 7q^8+\cdots.$$
\end{enumerate}
\end{rem}

\section{Trace formulae for the Lie algebra}
Let $V$ be a VOA of CFT-type with non-degenerate invariant bilinear form $(\cdot |\cdot )$.
Let $c$ be the central charge of $V$ and let $d$ be the dimension of $V^1$.
In this section, we state the trace formulae for the adjoint representation of the Lie algebra $V^1$.

\begin{prop}\label{MP}{\rm (\cite[Proposition 3]{T1})} Assume that $c\neq0$ and $\kappa_2\in V_\omega^2$.
Then for $a_1,a_2\in V^1$, 
$$\Tr_{V^1}\ a_{1(0)}a_{2(0)}=2\left(\frac{d}{c}-1\right)(a_1|a_2).$$
\end{prop}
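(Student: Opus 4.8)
The plan is to reduce the statement to the fact that the quadratic Casimir operator of the Lie algebra $V^1$ acts as an explicit scalar on the adjoint representation, and then to read off that scalar from the known shape of the second Casimir element $\kappa_2$. Both sides of the asserted identity are symmetric $\ad$-invariant bilinear forms on $V^1$, so the whole content is the precise proportionality constant, and it is $\kappa_2$ that supplies it.

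First I would pin down $\kappa_2$. Since $V_\omega^2=\C\omega$ and $\kappa_2\in V_\omega^2$, we may write $\kappa_2=\mu\,\omega$; applying Lemma \ref{PCE} with $m=i=2$ gives $L(2)\kappa_2=\kappa_0=d\1$, while the Virasoro relations give $L(2)L(-2)\1=\tfrac{c}{2}\1$, so $\mu=2d/c$ (this is where $c\neq0$ enters). Equivalently one may quote the expression for $\kappa_2$ in Lemma \ref{Lem:k234}. In particular $o(\kappa_2)$ acts on $V^1$ as $\tfrac{2d}{c}\,\omega_{(1)}=\tfrac{2d}{c}L(0)=\tfrac{2d}{c}\,{\rm Id}$.

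Next I would compute $o(\kappa_2)|_{V^1}$ a second way, directly from $\kappa_2=\sum_j x^j_{(-1)}x_j$. Using the normal-ordered mode identity
\[
(u_{(-1)}v)_{(1)}=\sum_{i\ge0}u_{(-i-1)}v_{(i+1)}+\sum_{i\ge0}v_{(-i)}u_{(i)}
\]
with $u=x^j$, $v=x_j$, acting on $w\in V^1$, only finitely many terms survive for degree reasons. Combining this with $a_{(1)}b=(a|b)\1$, $a_{(0)}\1=0$, $x_{(-1)}\1=x$, and the reconstruction identities $\sum_j(u|x_j)x^j=u=\sum_j(u|x^j)x_j$, one finds
\[
o(\kappa_2)\,w=2w+\Omega w,\qquad \Omega:=\sum_j\ad(x^j)\ad(x_j),
\]
where $\Omega$ is the Casimir operator of $V^1$ (the term $\sum_j[x_j,[x^j,w]]$ arising from the computation equals $\Omega w$ since the Casimir element $\sum_j x_j\otimes x^j$ is symmetric). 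Comparing with the previous paragraph forces $\Omega=2\bigl(\tfrac{d}{c}-1\bigr)\,{\rm Id}$ on $V^1$; note this uses only $\kappa_2\in V_\omega$, not the full class $\Sn^2$ hypothesis. The one point to watch is the bookkeeping in the second sum: besides the $i=0$ term producing $\Omega$, the $i=1$ term contributes an extra copy of $w$, and it is exactly this term that yields the constant $2$ and hence the $-1$ inside the final bracket.

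Finally I would convert the scalar value of $\Omega$ into the trace formula. Writing $\lambda=2(\tfrac{d}{c}-1)$, we have
\[
\lambda\,(a_1|a_2)=(\Omega a_1\,|\,a_2)=\sum_j\bigl([x^j,[x_j,a_1]]\,\big|\,a_2\bigr).
\]
Applying the invariance $([x,y]|z)=-(y|[x,z])$ of the form on $V^1$ once, rewriting the brackets by skew-symmetry, and then using the skew-adjointness of $\ad(a_1)$ together with the reconstruction identity collapses the right-hand side to $\sum_j(\ad(a_1)\ad(a_2)x^j|x_j)=\Tr_{V^1}a_{1(0)}a_{2(0)}$, which gives the claim. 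The main obstacle is not any single hard step but the careful mode and sign bookkeeping of the middle paragraph (in particular keeping the two roles of the dual basis distinct — once inside $\kappa_2$, once in the trace); everything else is a short manipulation with dual bases and the invariance of the form.
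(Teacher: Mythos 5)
Your proof is correct, but it reorganizes the argument rather than reproducing the paper's. The paper's proof is a single chain inside the trace: by invariance, $\Tr_{V^1}\ a_{1(0)}a_{2(0)} = -\sum_j(a_2|x^j_{(0)}a_{1(0)}x_j)$; the quasi-commutation identity of Lemma \ref{LMT411} (with $q=0$) then moves $x^j_{(0)}$ past $a_{1(0)}$, and after summing over $j$ the surviving terms are $-2(a_1|a_2)+(a_2|a_{1(1)}\kappa_2)$, which Lemma \ref{Lem:k234} evaluates. You instead compute $o(\kappa_2)=\kappa_{2(1)}$ on $V^1$ in two ways --- as $\frac{2d}{c}L(0)$ and via the $m=-1$ iterate formula applied to $\sum_j x^j_{(-1)}x_j$ --- to conclude that the Casimir operator $\Omega$ of $(V^1,(\cdot|\cdot))$ acts as the scalar $2(\frac{d}{c}-1)$ on the adjoint representation, and then convert this into the trace identity by the classical Casimir--Killing-form computation. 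The two mode calculations are cousins (Lemma \ref{LMT411} and your iterate expansion are both instances of the Borcherds identity, and both proofs rest on exactly the same inputs, $c\neq0$ and $\kappa_2=\frac{2d}{c}\omega$, the latter of which you rederive legitimately from Lemma \ref{PCE} together with $L(2)L(-2)\1=\frac{c}{2}\1$ instead of quoting Lemma \ref{Lem:k234}); I checked your expansion $(u_{(-1)}v)_{(1)}=\sum_{i\ge0}u_{(-1-i)}v_{(1+i)}+\sum_{i\ge0}v_{(-i)}u_{(i)}$ and the weight bookkeeping giving $o(\kappa_2)w=2w+\Omega w$, and both are right. What your route buys: it isolates as an explicit intermediate statement the fact that the Killing form on $V^1$ equals $2(\frac{d}{c}-1)(\cdot|\cdot)$, which the paper only records afterwards in the Note following the proposition and which is exactly what feeds the relation $h_i^\vee/k_i=\frac{d}{c}-1$ in \S 4; it also sidesteps the skew-symmetry evaluation $a_{1(1)}\omega=a_1$ (which uses $L(1)V^1=0$) by working with $\omega_{(1)}=L(0)$ directly. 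The paper's version is more economical but leaves that invariant implicit. One cosmetic slip in your middle paragraph: the constant $2$ arises from two separate terms, namely the $i=0$ term of your first sum and the $i=1$ term of the second, not from the second sum alone; your displayed identity is nonetheless correct as stated.
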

\begin{proof} This proposition is proved directly, along with Lemmas \ref{Lem:k234} and \ref{LMT411}: 
\begin{align*}
\Tr_{V^1}\ a_{1(0)}a_{2(0)}
&=\sum_{j=1}^d( a_{1(0)}a_{2(0)}x^j|x_j)=-\sum_{j=1}^d(a_{2}|x^j_{(0)}a_{1(0)}x_j)\\ &=
-2(a_1|a_2)+(a_2|a_{1(1)}\kappa_2)=2\left(\frac{d}{c}-1\right)(a_1|a_2).
\end{align*}
\end{proof}

\begin{note} We know the ratio between the form $(\cdot |\cdot )$ and the Killing form on $V^1$ from the proposition above.
\end{note}

\begin{rem}\label{Lem:SS} Assume that $c\neq0$ and $\kappa_2\in V_\omega^2$.
By Proposition $\ref{MP}$, the Lie algebra $V^1$ is semisimple if and only if $c\neq d$.
\end{rem}

\begin{prop}\label{MT1} Assume that $c\neq0$ and $\kappa_2\in V_\omega^2$.
Then for $a_1,a_2,a_3\in V^1$,
$$  \Tr_{V^1}\ a_{1(0)}a_{2(0)}a_{3(0)}=\left(\frac{d}{c}-1\right)(a_1|a_{2(0)}a_3).
$$
\end{prop}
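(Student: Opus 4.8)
The plan is to exploit the fact that, restricted to the Lie algebra $V^1$, the operator $a_{(0)}$ is nothing but the adjoint action $\ad a$, so that $\Tr_{V^1}\,a_{1(0)}a_{2(0)}a_{3(0)}=\Tr_{V^1}(\ad a_1)(\ad a_2)(\ad a_3)$, and then to reduce this cubic trace to the quadratic trace already evaluated in Proposition \ref{MP}. The whole point is that the cubic trace turns out to be totally antisymmetric in its three arguments, which collapses it onto the commutator $[a_2,a_3]=a_{2(0)}a_3$ and thereby converts the problem into an instance of Proposition \ref{MP}.

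First I would record two structural facts. On one hand, the invariance $(a_{(0)}b|c)=(a|b_{(0)}c)$ together with skew-symmetry of the bracket shows that each $\ad a$ is skew-symmetric with respect to $(\cdot|\cdot)$, i.e.\ $(\ad a)^{t}=-\ad a$ for the transpose taken relative to the invariant form on $V^1$, which is non-degenerate because invariant forms make the $L(0)$-eigenspaces mutually orthogonal, so non-degeneracy on $V$ descends to $V^1$. On the other hand, the Jacobi identity in $V^1$ gives $[\ad a_2,\ad a_3]=\ad([a_2,a_3])=\ad(a_{2(0)}a_3)$.

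Next, writing $T(a_1,a_2,a_3)=\Tr_{V^1}(\ad a_1)(\ad a_2)(\ad a_3)$, I would apply the identity $\Tr A=\Tr A^{t}$ together with $(ABC)^{t}=C^{t}B^{t}A^{t}$ and $(\ad a)^{t}=-\ad a$ to obtain $T(a_1,a_2,a_3)=-T(a_3,a_2,a_1)$; combined with the cyclic invariance of the trace this yields $T(a_1,a_2,a_3)=-T(a_1,a_3,a_2)$. Hence $2T(a_1,a_2,a_3)=T(a_1,a_2,a_3)-T(a_1,a_3,a_2)=\Tr_{V^1}(\ad a_1)[\ad a_2,\ad a_3]=\Tr_{V^1}\,a_{1(0)}(a_{2(0)}a_3)_{(0)}$, and applying Proposition \ref{MP} to the pair $a_1$ and $a_{2(0)}a_3$ gives $2T(a_1,a_2,a_3)=2(\tfrac{d}{c}-1)(a_1|a_{2(0)}a_3)$, which is the desired formula after dividing by $2$.

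The step I expect to require the most care is the antisymmetry $T(a_1,a_2,a_3)=-T(a_1,a_3,a_2)$: it is exactly the self-duality of the adjoint representation, so I must make sure the invariant form is non-degenerate on $V^1$, so that the transpose is well-defined and trace-preserving. An alternative, more computational route avoiding the transpose is to imitate the proof of Proposition \ref{MP} verbatim, expanding $T(a_1,a_2,a_3)=\sum_j(a_{1(0)}a_{2(0)}a_{3(0)}x^j|x_j)$, moving $a_{1(0)}$ across by skew-symmetry and isolating $a_2$ by invariance; this reproduces the same answer but still requires the antisymmetry input, so I would favour the transpose argument for brevity.
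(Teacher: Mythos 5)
Your proof is correct and takes essentially the same route as the paper: the antisymmetry $\Tr_{V^1}\,a_{1(0)}a_{2(0)}a_{3(0)}=-\Tr_{V^1}\,a_{1(0)}a_{3(0)}a_{2(0)}$ (which the paper obtains by moving the three operators across the dual-basis pairing via invariance---exactly your transpose argument in different notation), followed by collapsing the cubic trace onto $\tfrac{1}{2}\Tr_{V^1}\,a_{1(0)}(a_{2(0)}a_3)_{(0)}$ and applying Proposition \ref{MP} to the pair $a_1$, $a_{2(0)}a_3$. The supporting details you flag (non-degeneracy of the form on $V^1$, skew-symmetry of $\ad a$) are correct and are implicit in the paper's computation.
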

\begin{proof} Since 
\begin{equation}
\begin{split}
\Tr_{V^1}\ a_{1(0)}a_{2(0)}a_{3(0)}&=\sum_{j=1}^d\left( a_{1(0)}a_{2(0)}a_{3(0)}x^j | x_j\right)\\
&=(-1)^3\sum_{j=1}^d\left( x^j| a_{3(0)}a_{2(0)}a_{1(0)}x_j\right)\\
&=-\Tr_{V^1}\ a_{3(0)}a_{2(0)}a_{1(0)}=-\Tr_{V^1}\ a_{1(0)}a_{3(0)}a_{2(0)},
\end{split}\label{Eq:trace3}
\end{equation}
we have 
\begin{align*}
\frac{1}{2}\Tr_{V^1}\ a_{1(0)}(a_{2(0)}a_3)_{(0)}&=\frac{1}{2}\left(\Tr_{V^1}\ a_{1(0)}a_{2(0)}a_{3(0)}-\Tr_{V^1}\ a_{1(0)}a_{3(0)}a_{2(0)}\right)\\
&=\Tr_{V^1}\ a_{1(0)}a_{2(0)}a_{3(0)},
\end{align*}
which was mentioned in \cite[p180, d)]{Me84}.
Thus the assertion follows from Proposition \ref{MP}.
\end{proof}

\begin{rem} Under the assumption that $c\neq0$ and $\kappa_3\in V_\omega^3$, Proposition $\ref{MT1}$ can be proved by an argument similar to the proof of Theorem $\ref{MT2}$ in Appendix A. 
\end{rem}

The following theorem can be proved by using Borcherds' identity. See Appendices A and B for proofs.

\begin{thm}\label{MT2}{\rm (cf.\ \cite{Mar})}
Assume that $c\neq0,-22/5$ and $\kappa_4\in V_\omega^4$.
Then for $a_1,a_2,a_3,a_4\in V^1$,
\begin{align*}
&\Tr_{V^1}\ a_{1(0)}a_{2(0)}a_{3(0)}a_{4(0)}\\
=&\Bigl(1+\frac{3d(c-2)}{c(22+5c)}\Bigr)(a_{1(0)}a_{2}|a_{3(0)}a_4)+\Bigl(2-\frac{24d}{c(22+5c)}\Bigr)(a_{1(0)}a_4|a_{2(0)}a_3)\\
&+\frac{24d}{c(22+5c)}\Bigl((a_1|a_2)(a_3|a_4)+(a_1|a_3)(a_2|a_4)+(a_1|a_4)(a_2|a_3)\Bigr).
\end{align*} 
\end{thm}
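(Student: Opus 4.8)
The plan is to compute the quartic trace by the same strategy used for Propositions \ref{MP} and \ref{MT1}: expand
$\Tr_{V^1}\,a_{1(0)}a_{2(0)}a_{3(0)}a_{4(0)}=\sum_{j=1}^d(a_{1(0)}a_{2(0)}a_{3(0)}a_{4(0)}x^j\,|\,x_j)$
and massage the right-hand side until every summation over the dual bases $\{x^j\},\{x_j\}$ has been assembled into a Casimir element $\kappa_i$, whose value is then supplied by Lemma \ref{Lem:k234}. The elementary moves available are: skew-symmetry of the adjoint action $a_{(0)}b=-b_{(0)}a$; skew-adjointness with respect to the form $(a_{(0)}u|v)=-(u|a_{(0)}v)$ for $a\in V^1$ (valid on all of $V$ because $L(1)V^1=0$); invariance of the Casimir tensor $\sum_j x^j\otimes x_j$; and the low-mode relations $a_{(1)}b=(a|b)\1$ and $a_{(0)}b=[a,b]$ on $V^1$. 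The hypotheses $c\neq0,-22/5$ together with $\kappa_4\in V_\omega^4$ guarantee, via Remark \ref{Rch} and Lemma \ref{Lem:k234}, that $V_\omega$ has no singular vectors up to degree $4$ and that the explicit formulas for $\kappa_0,\dots,\kappa_4$ hold; in particular the denominator $c(5c+22)$ of $\kappa_4$ is nonzero, which is precisely the source of the factor $c(22+5c)$ in the statement.

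I would first record the symmetries of the trace. Applying skew-adjointness four times and then cyclicity of trace yields the reversal identity $\Tr_{V^1}\,a_{1(0)}a_{2(0)}a_{3(0)}a_{4(0)}=\Tr_{V^1}\,a_{1(0)}a_{4(0)}a_{3(0)}a_{2(0)}$, while the commutator $[a_{(0)},b_{(0)}]=(a_{(0)}b)_{(0)}$ combined with Proposition \ref{MT1} expresses the difference of two orderings differing by an adjacent transposition as the cubic trace $(d/c-1)(a_1|(a_{2(0)}a_3)_{(0)}a_4)$, hence purely in terms of the form and iterated brackets. Since the $24$ orderings collapse under cyclic-plus-reversal symmetry to three trace values, and these differences are known explicitly, the whole trace is determined up to a single overall normalization; equivalently, the trace is forced to be a combination of exactly the four invariant quadrilinear expressions on the right-hand side, and only one coefficient remains genuinely undetermined by this step.

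The core of the argument, and the step I expect to be the main obstacle, is the Borcherds-identity computation that fixes this normalization. Here I would use Borcherds' identity to commute the modes $x^j_{(n)}$ of the dual-basis vectors past the operators $a_{i(0)}$, thereby converting the leftover sums $\sum_j x^j_{(-k)}(\cdots)x_j$ into the Casimir elements $\kappa_k$ for $k=2,3,4$, while the remaining contractions collapse by the completeness relation $\sum_j(u|x^j)(x_j|v)=(u|v)$ into products of the form. Substituting $\kappa_2=\tfrac{2d}{c}\omega$, $\kappa_3=\tfrac{d}{c}L(-3)\1$ and especially $\kappa_4=\tfrac{3d}{c(5c+22)}(4L(-2)^2\1+(c+2)L(-4)\1)$ from Lemma \ref{Lem:k234}, and evaluating the resulting pairings of the shape $(a_i|a_{j(1)}\kappa_k)$ with the help of $L(1)V^1=0$, should yield precisely the three rational coefficients $1+\tfrac{3d(c-2)}{c(22+5c)}$, $2-\tfrac{24d}{c(22+5c)}$ and $\tfrac{24d}{c(22+5c)}$.

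The difficulty is entirely in the bookkeeping: each application of Borcherds' identity spawns several terms carrying binomial and sign factors, and one must correctly separate the degree-preserving contributions, which close up into the lower traces governed by Propositions \ref{MP} and \ref{MT1}, from the genuinely new contributions that carry $\kappa_4$. Because of this combinatorial weight, I would organize the calculation so that the $\kappa_4$-dependence is isolated at the last stage, and then cross-check the outcome against both the symmetry relations of the second step and the degenerations to the cubic and quadratic formulae. This is presumably why the authors relegate the full calculation to Appendix A and provide an independent verification in Appendix B.
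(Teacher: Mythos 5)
Your outline reproduces the paper's own proof (Appendix \ref{SP2}) move for move: expand the trace against the dual bases, commute the dual-basis modes through the $a_{i(0)}$ by a consequence of Borcherds' identity (the paper packages exactly this step as Lemma \ref{LMT411}), reassemble the sums $\sum_j x^j_{(1-k)}(\cdots)x_j$ into $\kappa_2,\kappa_3,\kappa_4$, substitute the explicit expressions of Lemma \ref{Lem:k234}, close the cubic sub-traces by Proposition \ref{MT1}, and reduce the leftover $\kappa_4$-contribution $(a_4|a_{3(1)}a_{2(1)}a_{1(1)}\omega_{(-1)}\omega)$ to products of the bilinear form via the commutator formula and skew-symmetry. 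So there is no methodological divergence to report.

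The gap is that the proposal stops exactly where the proof begins. The entire content of Theorem \ref{MT2} is the three rational coefficients, and you never derive them: your central paragraph ends with the assertion that the substitution ``should yield precisely'' those numbers, which is a prediction, not a computation. What the execution actually requires is visible in Appendix \ref{SP2}: one application of Lemma \ref{LMT411} giving \eqref{Eq:MT42}, a second and third application inside its last term, and then the evaluation of $(a_4|a_{3(1)}a_{2(0)}a_{1(1)}\kappa_3)$, $(a_4|a_{3(1)}a_{2(1)}a_{1(1)}\kappa_4)$, etc., whose signs and multiplicities are precisely what produce $1+\tfrac{3d(c-2)}{c(22+5c)}$, $2-\tfrac{24d}{c(22+5c)}$ and $\tfrac{24d}{c(22+5c)}$; none of this bookkeeping can be waved through, because the theorem \emph{is} the bookkeeping. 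A second, more substantive error: your claim that cyclic-plus-reversal symmetry together with Proposition \ref{MT1} forces the trace into the span of the displayed invariants with only one undetermined coefficient is false for a general Lie algebra. Symmetry determines the trace only up to its totally symmetric part, i.e.\ up to the polarization of $x\mapsto\Tr_{V^1}(x_{(0)})^4$, and this quartic invariant need not be proportional to $(x|x)^2$ (for type $D_4$ or $A_n$ with $n\ge3$ there exist primitive degree-$4$ invariants). That the symmetric part does collapse onto $(a_1|a_2)(a_3|a_4)+(a_1|a_3)(a_2|a_4)+(a_1|a_4)(a_2|a_3)$ is exactly what the hypothesis $\kappa_4\in V_\omega^4$ buys --- it is the VOA counterpart of Okubo's quartic trace identity \cite{o2} for Deligne's series --- so it is an output of the computation, not an input from symmetry. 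Since you use this claim only as a cross-check it does not derail the plan, but it cannot substitute for the missing calculation.
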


\begin{rem} Let $\mathfrak{g}$ be a simple Lie algebra of type $A_1$, $A_2$, $G_2$, $F_4$, $E_6$, $E_7$ or $E_8$.
For an irreducible representation $\rho$ of $\mathfrak{g}$, the quartic trace formula $\Tr\ \rho(x)^4$ $(x\in\mathfrak{g})$ is described in \cite{o2}.
Considering the case where $a_1=a_2=a_3=a_4$ in Theorem $\ref{MT2}$, we obtain the same formula as in \cite{o2} $($cf.\ \cite{Me83}$)$ for the adjoint representation.
\end{rem}

\begin{rem}\label{Rem:T} Let $\mathfrak{g}$ be a finite dimensional Lie algebra over $\C$ with non-degenerate invariant bilinear form.
By the same argument as in \eqref{Eq:trace3}, for $a_1,a_2,\dots,a_m\in\mathfrak{g}$, $$\Tr_\mathfrak{g}\ \ad(a_1)\ad(a_2)\dots\ad(a_m)=(-1)^m\Tr_\mathfrak{g}\ \ad (a_m)\ad (a_{m-1})\dots\ad(a_1).$$
When $m=2n+1$, substituting $a_1=a_2=\dots=a_m=x$, we obtain $\Tr_{\mathfrak{g}} \ \ad(x)^{2n+1}=0$.
\end{rem}

\begin{rem} By Remark $\ref{Rem:T}$, for $a\in V^1$ and $n\in\Z_{\ge0}$, $\Tr_{V^1}(a_{(0)})^{2n+1}=0$.
\end{rem}

\begin{rem}\label{Rem:CD} Theorem $\ref{MT2}$ remains true if we replace the assumption that $\kappa_4\in V_\omega^4$ by the assumption that $V^1$ is a conformal $4$-design.
$($See Appendix B for a sketch of the proof.$)$
Under some assumptions $($e.g. \cite[Condition 2]{Ya}$)$, $V^1$ is a conformal $4$-design if and only if $\kappa_4\in V_\omega^4$ $($cf.\ \cite[Proposition 5]{Ya}$)$.
\end{rem}

\section{Constraints on $c$ and $d$}
Let $V$ be a simple VOA of CFT-type with non-zero invariant bilinear form.
Let $c$ be the central charge of $V$ and let $d$ be the dimension of $V^1$.
Throughout this section, we assume that $d>0$, $c\notin\{d,0,-22/5\}$ and $\kappa_4\in V_\omega^4$.

In this section, we obtain constraints on $c$ and $d$ by using the trace formulae in the previous section, based on the method in \cite{ma}.
Furthermore, we discuss possible $C_2$-cofinite VOAs.
These results were obtained in \cite[\S 5]{T1} and \cite[\S 2]{T2} by different methods based on genus zero correlation functions and modular differential equations.

Since $V$ is simple, the invariant form is non-degenerate.
Hence, by Remark \ref{Lem:SS}, the Lie algebra $V^1$ is semisimple, and there exists an $\mathfrak{sl}_2$ triplet $(e,f,h)$ in $V^1$.
Letting $a_1=a_4=e$ and $a_2=a_3=f$, we have $(a_{1(0)}a_2|a_{3(0)}a_4)\neq0$ and $ (a_{1(0)}a_4|a_{2(0)}a_3)=0$.
Hence, by the cyclic property of trace, the coefficients of the first and second terms in Theorem \ref{MT2} are the same.
Thus we obtain
\begin{equation}
d=\frac{c(22+5c)}{10-c}.\label{Eq:cd}
\end{equation}
The following was mentioned in \cite[(2.13)]{T2}.

\begin{lem}\label{Lem:cd} Let $c$ be a positive rational number.
If the number $d$ given in \eqref{Eq:cd} is a positive integer, then $(c,d)$ is one of the $21$ pairs in Table \ref{TConstraint}.
\end{lem}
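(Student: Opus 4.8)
The plan is to treat \eqref{Eq:cd} as an elementary Diophantine problem. First I would clear the denominator and rewrite the relation as a quadratic equation in $c$ with $d$ as a parameter, namely $5c^2+(22+d)c-10d=0$. Since $c>0$ gives $22+5c>0$, the right-hand side of \eqref{Eq:cd} is positive exactly when $10-c>0$, so positivity of $d$ already forces $0<c<10$. The product of the two roots of the quadratic is $-2d<0$, so exactly one root is positive, and one checks directly that this positive root automatically satisfies $c<10$; thus no solutions are lost and none are spurious on the positivity side.

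Next I would reduce the rationality condition to a perfect-square condition. Because $d$ is a positive integer, $c$ is rational if and only if the discriminant $(22+d)^2+200d=d^2+244d+484$ is the square of an integer, say $k^2$. Completing the square turns this into $(d+122)^2-k^2=14400$, a difference of two squares, which factors as $(d+122-k)(d+122+k)=14400$. Writing the two factors as $m$ and $n$ with $mn=14400$, the constraints translate cleanly: positivity of $d$ forces $m,n>0$, and integrality of $d=\tfrac12(m+n)-122$ forces $m$ and $n$ to share the same parity; since their product is even, both must be even. Setting $m=2m'$ and $n=2n'$ then yields $m'n'=3600$ and $d=m'+n'-122$.

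At this point the problem becomes purely combinatorial: count the unordered factorizations $3600=m'n'$ (with $m'\le n'$) for which $d=m'+n'-122$ is a \emph{positive} integer. Since $3600=2^4\cdot3^2\cdot5^2=60^2$ has $45$ divisors, there are $(45+1)/2=23$ such unordered pairs, and each determines a distinct sum $m'+n'$, hence a distinct $d$. By AM--GM the sum $m'+n'$ is minimized at $m'=n'=60$, where it equals $120$; to locate the remaining small sums I would solve $x^2-(m'+n')x+3600=0$ at the boundary values, finding that $m'+n'=121$ is impossible (discriminant $241$ is not a square) while $m'+n'=122$ occurs exactly for $\{m',n'\}=\{50,72\}$ (discriminant $484=22^2$). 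These two pairs give $d=-2$ and $d=0$ respectively, both excluded, and every other pair has $m'+n'\ge123$, hence $d\ge1$. This leaves exactly $23-2=21$ admissible pairs, and for each the corresponding $c=\bigl(k-22-d\bigr)/10$ lies in $(0,10)$, producing the $21$ entries of Table \ref{TConstraint}.

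The bulk of the argument is routine once the difference-of-squares factorization is in hand; the only place demanding care is the bookkeeping in the final step, namely verifying that the divisor pairs of $3600$ with sum at most $122$ are precisely $\{60,60\}$ and $\{50,72\}$, so that exactly two of the $23$ pairs are discarded. I expect this divisor analysis near the AM--GM minimum to be the main point to get right, though it is elementary.
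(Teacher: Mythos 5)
Your proof is correct, but it takes a genuinely different route from the paper's. The paper parametrizes by $c$ rather than by $d$: writing $c=p/q$ in lowest terms, it rewrites \eqref{Eq:cd} as $d=p(22q+5p)/\bigl(q(10q-p)\bigr)$, observes that integrality of $d$ together with $\gcd(p,q)=1$ forces $q\mid 5p$ and hence $q\in\{1,5\}$, and then finishes by checking the finitely many candidates $0<p<10q$ for integrality of $d$. You instead fix the integer $d$, treat \eqref{Eq:cd} as the quadratic $5c^2+(22+d)c-10d=0$, and convert rationality of the positive root into the perfect-square condition $(d+122)^2-k^2=14400$, which your difference-of-squares factorization reduces to the unordered divisor pairs of $3600$. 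The trade-off is clear: the paper's setup is shorter but ends in a blind trial enumeration (up to $49$ values of $p$ when $q=5$), with the count $21$ only visible at the end; your version needs more algebra up front but then yields a bijection between solutions and the $23$ divisor pairs of $3600=60^2$, so the number $21=23-2$ emerges structurally once the pairs $\{60,60\}$ and $\{50,72\}$ (giving $d=-2$ and $d=0$) are discarded, and each admissible pair returns $(c,d)$ in closed form via $c=(k-22-d)/10$ (for instance $\{48,75\}$ gives $d=1$, $k=27$, $c=2/5$). Your boundary analysis near the AM--GM minimum is also sound: sum $121$ has non-square discriminant $241$, sum $122$ occurs only for $\{50,72\}$, so all remaining pairs have $d\ge1$. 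Both arguments leave the final matching against Table \ref{TConstraint} as a routine computation, and at that level of detail yours is complete.
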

\begin{proof} It follows from \eqref{Eq:cd} and $d>0$ that $0< c< 10$.
Let $c={p}/{q}$, where $p$ and $q$ are relatively prime positive integers.
By \eqref{Eq:cd}, we have $$d=\frac{p(22q+5p)}{q(10q-p)}.$$
Since $p$ and $q$ are relatively prime and $d$ is an integer, $q$ is a factor of $22q+5p$.
Hence $q$ must be $1$ or $5$.
By $q\in\{1,5\}$, $0<p/q<10$ and $d\in\Z_{>0}$, one can easily see that $c=p/q$ is one of $21$ cases in Table \ref{TConstraint}.
\end{proof}

By Remark \ref{Lem:SS}, $V^1$ is semisimple.
Let $V^1=\bigoplus_{i=1}^t\Fg_i$ be the direct sum of $t$ simple Lie algebras $\Fg_i$ and let $k_i$ be the level of the affine Lie algebra associated to $\Fg_i$ on $V$.
Let us determine the ratio $h_i^\vee/k_i$ along the line of \cite[(3.6)]{DM04a}, where $h_i^\vee$ is the dual Coxeter number of $\Fg_i$.
Let ${\phi}_i(\cdot,\cdot)$ be the normalized invariant bilinear form on $\mathfrak{g}_i$ so that $\phi_i(\alpha,\alpha)=2$ for a long root $\alpha$.
Comparing the commutator formula as the affine representation and that in a VOA, we obtain 
$$k_i\phi_i(\cdot,\cdot)=(\cdot|\cdot)$$ on $\mathfrak{g}_i$.
Recall from \cite[Excercise 6.2]{ka} that for $x,y\in\Fg_i$, $$2h_i^\vee\phi_i(x,y)= \Tr_{\mathfrak{g}_i}\ \ad(x)\ad(y).$$
Hence by Proposition \ref{MP}, we obtain for all $i$
\begin{equation}
\frac{h_i^\vee}{k_i}=\frac{d}{c}-1.\label{Eq:h/k}
\end{equation}

Now, we assume that $V$ is $C_2$-cofinite.
Then $k_i$ is a positive integer (\cite[Theorem 1.1]{DM06}), and hence $c$ is a positive rational number by \eqref{Eq:h/k}.

\begin{prop}\label{PCon} {\rm (cf.\ \cite[Proposition 5]{T1})} Let $V$ be a $C_2$-cofinite, simple VOA of CFT-type with non-zero invariant bilinear form.
Assume that $V^1\neq0$ and the central charge of $V$ is neither $0$, $-22/5$, nor $\dim V^1$.
We further assume that $\kappa_4\in V_\omega^4$.
Then the Lie algebra $V^1$ is a simple Lie algebra of type $A_1$, $A_2$, $G_2$, $D_4$, $F_4$, $E_6$, $E_7$ or $E_8$.
Furthermore, the level is $1$.
\end{prop}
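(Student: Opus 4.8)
The plan is to combine the numerical constraints already derived—equation \eqref{Eq:cd} together with the table of Lemma \ref{Lem:cd}—with the rigidity coming from the \emph{common} value of the ratio $h_i^\vee/k_i$ in \eqref{Eq:h/k}, and then to eliminate all but the eight Deligne pairs by playing dual Coxeter numbers against dimensions.

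First I would record the structural data. Since $V$ is simple the invariant form is non-degenerate, so by Remark \ref{Lem:SS} the Lie algebra $V^1$ is semisimple; write $V^1=\bigoplus_{i=1}^t\Fg_i$ with each $\Fg_i$ simple of level $k_i$. Because $V$ is $C_2$-cofinite each $k_i$ is a positive integer and $c$ is a positive rational number, while $d=\dim V^1$ is of course a positive integer. By \eqref{Eq:h/k} the ratio
$$r:=\frac{h_i^\vee}{k_i}=\frac{d}{c}-1$$
is independent of $i$. Moreover \eqref{Eq:cd} holds, so by Lemma \ref{Lem:cd} the pair $(c,d)$ is one of the $21$ entries of Table \ref{TConstraint}, and for each of these $r$ is thereby determined.

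The heart of the argument is a uniform finiteness estimate. Since $k_i\ge 1$, every simple summand satisfies $h_i^\vee=rk_i\ge r$; and when $r\notin\Z$ the integrality of $h_i^\vee$ forces $k_i$, hence $h_i^\vee$, to be even larger. I would then invoke the elementary fact that the dimension of a simple Lie algebra increases with its dual Coxeter number: among all simple Lie algebras with $h^\vee=m$ the minimum of $\dim\Fg$ is attained by a type-$B$ or type-$D$ algebra, except in the finitely many coincidental cases realized by an exceptional algebra, and this minimum is of order $m^2/2$. Running this bound through the $21$ pairs, I expect that for the thirteen non-Deligne pairs the value of $r$ is so large that even a single summand $\Fg_i$ would have $\dim\Fg_i>d$, contradicting $\sum_i\dim\Fg_i=d$; the degenerate pairs with $d=1$ or with non-integral $r$ are discarded for the same reason. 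Hence only the eight pairs $(1,3),(2,8),(14/5,14),(4,28),(26/5,52),(6,78),(7,133),(8,248)$ survive.

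Finally, for each surviving pair I would observe that $r$ equals exactly the dual Coxeter number of the corresponding Deligne algebra $\Fg$ and that $d$ equals exactly $\dim\Fg$. Since any summand $\Fg_i$ has $h_i^\vee\ge r$ and hence $\dim\Fg_i\ge d$, with equality forcing $\Fg_i=\Fg$ and $k_i=1$, the decomposition must have a single term: $V^1=\Fg$ is simple of type $A_1,A_2,G_2,D_4,F_4,E_6,E_7$ or $E_8$, and the level is $1$. The main obstacle is the middle step: it is a finite but delicate verification, and one must rule out that some exotic semisimple combination of several small simple factors reproduces $d$ while keeping the single ratio $r$ integral. This is exactly where the constancy of $h_i^\vee/k_i$ across all components—an input unavailable to a purely dimension-counting argument—does the decisive work.
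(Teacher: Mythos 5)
Your proposal is correct and takes essentially the same route as the paper's own proof: semisimplicity of $V^1$ via Remark \ref{Lem:SS}, the $21$ pairs of Lemma \ref{Lem:cd}, the constant ratio \eqref{Eq:h/k} together with integrality of the levels, and elimination by comparing $d$ against the minimum dimension of a simple Lie algebra with prescribed dual Coxeter number (Table \ref{Tmaxdim}), with equality forcing $t=1$, $k=1$ and the eight Deligne types. The one caution is that minimal dimension is not literally monotone in $h^\vee$ (e.g.\ $B_9$ has $h^\vee=17$ and dimension $171$, while $E_7$ has $h^\vee=18$ and dimension $133$), so the inequality $\dim\Fg_i\ge d$ must be checked over the admissible values $h_i^\vee=rk_i$, $k_i\in\Z_{>0}$, exactly as the paper does—your tabular verification effectively accomplishes this, so the wording, not the substance, needs the fix.
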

\begin{proof} Given $h^\vee\in\Z_{\ge2}$, the minimum dimension of simple Lie algebras with dual Coxeter number $h^\vee$ and the corresponding type are summarized in Table \ref{Tmaxdim} (cf.\ \cite[\S 6.1]{ka}).
By Lemma \ref{Lem:cd}, $(c,d)$ is one of the $21$ pairs in Table \ref{TConstraint}, which determines $h_i^\vee/k_i$ by \eqref{Eq:h/k}.
Notice that $h_i^\vee=h_i^\vee/k_i\times k_i$ and $k_i$ is a positive integer.
One can easily verify that $d$ is less than or equal to the minimum dimension in Table \ref{Tmaxdim} for any positive integer $k_i$.
In particular, the equality holds if and only if $c\in \{1,2,14/5,4,26/5,6,7,8\}$, $t=1$ and $k=1$.
Since $t$ is the number of simple components of $\mathfrak{g}$, $V^1$ is simple.
In addition, by Table \ref{Tmaxdim}, if $c=1,2,14/5,4,26/5,6,7,8$, then the type of $V^1$ is $A_1$, $A_2$, $G_2$, $D_4$, $F_4$, $E_6$, $E_7$, $E_8$, respectively.
\end{proof}

Let us discuss a possible VOA structure of $V$.

\renewcommand{\arraystretch}{1.3}
\begin{table}
\caption{$(c,d)$ for a VOA of class $\Sn^4$ and the corresponding ratio $h^\vee/k$}\label{TConstraint}
\begin{tabular}{|c|c|c|c|c|c|c|c|c|c|c|c|} 
\hline 
$c$&$\frac{2}{5}$&$1$&$2$&$\frac{14}{5}$&$4$&$5$&$\frac{26}{5}$&$6$&$\frac{32}{5}$&$\frac{34}{5}$&$7$\\\hline
$d$&$1$&$3$&$8$&$14$&$28$&$47$&$52$&$78$&$96$&$119$&$133$\\\hline
${h^\vee}/{k}$&$\frac{3}{2}$&$2$&$3$&$4$&$6$&$\frac{42}{5}$&$9$&$12$&$14$&$\frac{33}{2}$&$18$\\ \hline\hline
$c$&$\frac{38}{5}$&$8$&$\frac{41}{5}$&$\frac{42}{5}$&$\frac{44}{5}$&$9$&$\frac{46}{5}$&$\frac{47}{5}$&$\frac{48}{5}$&$\frac{49}{5}$&\\\hline
$d$&$190$&$248$&$287$&$336$&$484$&$603$&$782$&$1081$&$1680$&$3479$&\\\hline
${h^\vee}/{k}$&$24$&$30$&$34$&$39$&$54$&$66$&$84$&$114$&$174$&$354$&\\\hline
\end{tabular}
\end{table}

\begin{table}
\caption{Minimum dimension $D$ of simple Lie algebras with dual Coxeter number $h^\vee$}\label{Tmaxdim}
\begin{tabular}{|c|c|c|c|c|c|c|c|c|c|c|} 
\hline 
$h^\vee$&$2$&$3$&$4$&$9$&$12$&$18$&$30$&$2n-1(\ge5),n\neq5$&$2n(\ge6),n\neq6,9,15$\\\hline
$D$&$3$&$8$&$14$&$52$&$78$&$133$&$248$&$2n^2+n$&$2n^2+3n+1$\\\hline
Type&$A_1$&$A_2$&$G_2$&$F_4$&$E_6$&$E_7$&$E_8$&$B_n$&$D_{n+1}$\\ \hline
\end{tabular}
\end{table}

\begin{thm}\label{PLie} {\rm (cf.\ \cite[Theorem 2.8]{T2})} Let $V$ be a $C_2$-cofinite, simple VOA of CFT-type with non-zero invariant bilinear form.
Assume that $V^1\neq0$ and the central charge of $V$ is neither $0$, $-22/5$, nor $\dim V^1$.
We further assume that $\kappa_4\in V_\omega^4$.
Then $V$ is isomorphic to one of the simple affine VOAs associated with simple Lie algebras of type $A_1$, $A_2$, $G_2$, $D_4$, $F_4$, $E_6$, $E_7$, $E_8$ at level $1$.
\end{thm}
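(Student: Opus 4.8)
The plan is to leverage Proposition \ref{PCon}, which already identifies the Lie algebra structure of $V^1$ and the level, so that the remaining task is to promote this Lie-theoretic data to a full VOA isomorphism. First I would invoke Proposition \ref{PCon} to conclude that $V^1$ is a simple Lie algebra $\mathfrak{g}$ of one of the eight Deligne types, carrying its affine structure at level $1$. This means the subVOA of $V$ generated by $V^1$ contains a homomorphic image of the universal affine VOA $V_{\mathfrak{g}}(1,0)$ associated to $\mathfrak{g}$ at level $1$; since $V$ is simple, this image is in fact isomorphic to the simple affine VOA $L_{\mathfrak{g}}(1,0)$, which I shall denote $L$. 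The goal is then to show $V = L$, i.e.\ that $V^1$ generates all of $V$.

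The key step is a central-charge comparison. The subVOA $L = L_{\mathfrak{g}}(1,0)$ has its own central charge, namely the value $c_{\mathrm{Sug}} = k\dim\mathfrak{g}/(k+h^\vee)$ given by the Sugawara construction at level $k=1$; substituting $k=1$ and $h^\vee/k = d/c - 1$ from \eqref{Eq:h/k} together with $d = \dim\mathfrak{g}$ shows that this Sugawara central charge equals $c$, the central charge of $V$. Consequently the conformal vector $\omega_L$ of $L$, built from the Casimir of $\mathfrak{g}$, has the same central charge as $\omega$. I would next argue that $\omega_L = \omega$: both are degree-two vectors annihilated by $L(1)$, and because $V$ is of CFT-type with $V^1 \neq 0$, a standard argument (e.g.\ the uniqueness of the conformal vector subordinate to a given $\mathfrak{g}$-structure, as in \cite{DM04a,FZ}) forces the Sugawara vector to coincide with $\omega$. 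Having matched both the weight-one space and the conformal vector, $L$ is a full subVOA of $V$ with the same central charge.

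The heart of the matter is then to show the inclusion $L \subseteq V$ is an equality, and this is where I expect the main obstacle. The natural route is representation-theoretic: regard $V$ as an $L$-module and use the $C_2$-cofiniteness of $V$ to control the decomposition. By the theory of \cite{FZ}, the irreducible $L_{\mathfrak{g}}(1,0)$-modules are the integrable highest-weight modules at level $1$, indexed by a finite set of dominant weights, and for the Deligne exceptional series the admissible modules and their conformal weights are explicitly known. Since $V$ is $C_2$-cofinite and simple, it decomposes as a finite direct sum of such irreducibles; the key is that any nontrivial irreducible $L$-module other than $L$ itself has minimal conformal weight strictly positive and, crucially, contributes to $V^1$ or lower degrees in a way incompatible with $V^1 = \mathfrak{g}$ and $V^0 = \C\1$. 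The delicate point is ruling out higher-weight summands that first appear only in large degree: one must verify that for each of the eight Lie algebras the level-one modules all have conformal weight at least $1$, with equality only for the adjoint-type piece already inside $L$, so that no room remains for additional summands. I would carry this out by checking, case by case using the known minimal conformal weights $h_\lambda = (\lambda,\lambda+2\rho)/(2(k+h^\vee))$ of the level-one integrable modules, that every non-vacuum module either has $h_\lambda$ non-integral or is excluded by the requirement that $V$ be a VOA (integral grading) with $\dim V^0 = 1$ and $V^1 = \mathfrak{g}$.

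Finally, once the $L$-module decomposition of $V$ is pinned down to the single summand $L$, the equality $V = L_{\mathfrak{g}}(1,0)$ follows, completing the identification with the simple affine VOA at level $1$. The reliance on \eqref{Eq:h/k} and Proposition \ref{PCon} packages all the numerical constraints, so that the only genuinely new work is the module-theoretic finiteness argument, which is precisely the step most likely to require careful case analysis.
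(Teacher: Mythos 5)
Your overall architecture coincides with the paper's proof: invoke Proposition \ref{PCon}, show that the vertex subalgebra $U$ generated by $V^1$ is the simple affine VOA at level $1$ whose conformal vector equals $\omega$, then decompose $V$ as a $U$-module and kill nontrivial summands using the Frenkel--Zhu classification together with integrality of the grading and $\dim V^0=1$. However, two pivotal steps rest on invalid inferences. First, you claim that simplicity of $V$ forces the image of the universal affine VOA inside $V$ to be the simple quotient $L_{\mathfrak{g}}(1,0)$. Simplicity of a VOA does not pass to its subalgebras, so this does not follow; the correct justification (the one the paper uses) is the integrability theorem of Dong--Mason \cite[Theorem 1]{DM06}, which uses $C_2$-cofiniteness and simplicity of $V$ to show that $V$ is an integrable $\widehat{\mathfrak{g}}$-module, whence the subalgebra generated by $V^1$ is the irreducible level-one vacuum module $L_{\mathfrak{g}}(1,0)$. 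Second, your identification $\omega_L=\omega$ is deduced from equality of central charges plus an appeal to ``uniqueness of the conformal vector,'' which is not a valid principle: a VOA may contain distinct Virasoro vectors, even of equal central charge, and no general uniqueness theorem applies here. The paper instead observes that the Sugawara vector is explicitly $(c/2d)\kappa_2$ (cf.\ \cite[(4.1)]{DM04a}), and the standing hypothesis $\kappa_4\in V_\omega^4$ (hence $\kappa_2\in V_\omega^2$) together with Lemma \ref{Lem:k234} gives $\kappa_2=(2d/c)\omega$, so $\omega_L=\omega$ on the nose. Tellingly, your version of this step never uses the Casimir hypothesis at all, only \eqref{Eq:h/k}, which is a sign that something essential is missing.

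Two smaller corrections to the final step. Complete reducibility of $V$ as a module over the affine subVOA comes from rationality of that subVOA (\cite[Theorem 3.7]{DLM97}), not from $C_2$-cofiniteness and simplicity of $V$ as you assert. And your interim claim that the level-one modules ``all have conformal weight at least $1$, with equality only for the adjoint-type piece'' is false: the non-vacuum level-one irreducibles have lowest weights strictly between $0$ and $1$ (for instance $1/4$ for $A_1$, $3/5$ for $F_4$), and the adjoint representation sits inside the vacuum module rather than forming a separate summand. Fortunately the criterion you state at the end is the right one and is exactly the paper's argument: every non-vacuum irreducible has non-integral lowest weight in the interval $(0,1)$, hence cannot embed in the integrally graded $V$, while an extra vacuum summand would contradict $\dim V^0=1$; therefore $V=U$.
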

\begin{proof} By Proposition \ref{PCon}, the Lie algebra $V^1$ is simple and its type is one of $A_1$, $A_2$, $G_2$, $D_4$, $F_4$, $E_6$, $E_7$, $E_8$.
In addition, the level is $1$.

Let $U$ be the vertex subalgebra of $V$ generated by $V^1$.
Note that $U$ has the conformal vector $\omega_U$ (cf.\ \cite[(4.1)]{DM04a}).
By \cite[Theorem 1]{DM06}, $U$ is isomorphic to a simple affine VOA associated with $V^1$ at level $1$.
By the explicit construction of $\omega_U$, one can see that $\omega_U=(c/2d)\kappa_2$, which is equal to $\omega$ by Lemma \ref{Lem:k234}.
It follows from \cite[Theorem 3.7]{DLM97} that $U$ is rational.
Let $V=U\oplus W$ as $U$-modules.
Notice that all irreducible $U$-modules are classified in \cite[Theorem 3.1.3]{FZ} and that by the possible types of $V^1(=U^1)$ given in Proposition \ref{PCon}, the lowest weight of any irreducible $U$-module is a non-negative rational number less than $1$.
Since $\dim V^0=\dim U^0=1$, $W$ must be $0$, namely, $V=U$.
\end{proof}

\begin{rem} By Remark $\ref{Rem:CD}$, Theorem $\ref{PLie}$ remains true if we replace the assumption $\kappa_4\in V_\omega^4$ by the assumption that $V^1$ is a conformal $4$-design.
\end{rem}

\begin{note} It was announced in \cite[Proposition 6]{T2} that if $\kappa_6\in V_\omega^6$ then $V$ is isomorphic to the simple affine VOA associated with the simple Lie algebra of type $A_1$ or $E_8$ at level $1$.
A similar result was given in \cite[Theorem 4.1]{Ho08} under the assumption that $V^1$ is a conformal $6$-design.
\end{note}

\begin{note} Under the same assumptions as in Theorem \ref{PLie}, possible characters of VOAs were determined in \cite{T2} by using a 2nd order modular differential equation, which proves Theorem \ref{PLie} (\cite[Theorem 2.8]{T2}).
\end{note}

\begin{note} 
A $\Z_{\ge0}$-graded vertex algebra $V_{A_{1/2}}$ (resp. $V_{E_{7+{1}/{2}}}$) with one-dimensional abelian Lie algebra (resp. the non-simple Lie algebra $E_{7+{1}/{2}}$) was considered in \cite{Ka}.
Its ``central charge" is $2/5$ (resp. $38/5$), which was chosen so that the associated ``character" satisfies the modular differential equation of rational conformal field theories with two characters.
Since the corresponding pairs $(c,d)=(2/5,1)$ and $(38/5,190)$ are included in Table \ref{TConstraint}, we expect that $V_{A_{1/2}}$ and $V_{E_{7+{1}/{2}}}$ are ``nice" vertex algebras.
However, we cannot deal with these vertex algebras in this article by the following reason:
The weight one spaces of $V_{A_{1/2}}$ and $V_{E_{7+{1}/{2}}}$ are non-semisimple Lie algebras.
But the assumptions of this section imply that the weight one space is a semisimple Lie algebra by Remark \ref{Lem:SS}.
\end{note}

\section{Classification of VOAs of class $\Sn^4$ with minimal conformal weight one}
In this section, we prove that the simple affine VOAs associated with simple Lie algebras of type $A_1,A_2,G_2,D_4,F_4,E_6,E_7,E_8$ at level $1$ are of class $\Sn^4$.
As a consequence, we classify $C_2$-cofinite, simple VOAs of class $\Sn^4$ with minimal conformal weight one.

Let $\mathfrak{g}$ be a simple Lie algebra and let $L_\mathfrak{g}(1,0)$ denote the simple affine VOA associated with ${\mathfrak{g}}$ at level $1$.
It is well-known that $L_\mathfrak{g}(1,0)$ is $C_2$-cofinite and of CFT-type and that its central charge is ${\dim\mathfrak{g}}/({1+h^\vee})$, where $h^\vee$ is the dual Coxeter number of $\mathfrak{g}$.
Clearly, $L_\mathfrak{g}(1,0)^1\cong\mathfrak{g}$ as Lie algebras.

Set $N=N(L_\Fg(1,0))$.
Using \cite[(4.39)]{kp} (cf.\ \cite[Exercise 12.17]{ka}), A. Baker and H. Tamanoi gave in \cite{bt} the explicit formula of the graded dimension of the fixed point subspace of $N$ on $L_\Fg(1,0)$ when the type of $\mathfrak{g}$ is $A_n$, $D_n$ or $E_n$;
for example
\begin{align}
\dim_*(L_\Fg(1,0)^{N},q)=\begin{cases}\displaystyle\dim_*(V_\omega,q)& {\rm if}\ \Fg=A_1,\\
1+q^2+2q^3+3q^4+4q^5+8q^6+\cdots& {\rm if}\ \Fg=A_2,\\
1+q^2+q^3+4q^4+4q^5+9q^6+\cdots& {\rm if}\ \Fg=D_4,\\
1+q^2+q^3+2q^4+3q^5+6q^6+\cdots& {\rm if}\ \Fg=E_6,\\
1+q^2+q^3+2q^4+2q^5+5q^6+\cdots& {\rm if}\ \Fg=E_7,\\
1+q^2+q^3+2q^4+2q^5+4q^6+\cdots& {\rm if}\ \Fg=E_8.\label{Eq:D4}
\end{cases}
\end{align}
Comparing Remark \ref{Rch} (2) and \eqref{Eq:D4}, we obtain the following proposition.

\begin{prop}\label{PSn}
If the type of $\mathfrak{g}$ is $A_1$, $A_2$, $D_4$, $E_6$, $E_7$ and $E_8$, then $L_\mathfrak{g}(1,0)$ is of class $\Sn^\infty$, $\Sn^2$, $\Sn^3$, $\Sn^4$, $\Sn^5$ and $\Sn^7$ under $N(L_\mathfrak{g}(1,0))$, respectively.
\end{prop}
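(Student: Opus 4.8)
The plan is to prove all six assertions uniformly, by comparing the graded dimension of the fixed-point subalgebra $L_\mathfrak{g}(1,0)^N$ with that of $V_\omega$ degree by degree. The one structural input is that every automorphism of $L_\mathfrak{g}(1,0)$ fixes the conformal vector $\omega$, hence fixes the subVOA $V_\omega$ it generates pointwise; thus $V_\omega\subseteq L_\mathfrak{g}(1,0)^N$, so that $\dim V_\omega^i\le\dim(L_\mathfrak{g}(1,0)^N)^i$ for every $i$, with equality in a given degree forcing $(L_\mathfrak{g}(1,0)^N)^i=V_\omega^i$ there. Consequently, if $n$ denotes the largest degree through which the two graded dimensions agree, then $L_\mathfrak{g}(1,0)$ is of class $\Sn^n$ but not $\Sn^{n+1}$ under $N$, and it suffices to locate this $n$ in each case.

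First I would fix $\dim_*(V_\omega,q)$. For the six types the central charge $\dim\mathfrak{g}/(1+h^\vee)$ equals $1,2,4,6,7,8$ respectively. Each of these is $\ge 1$, whereas every central charge $1-6(p-q)^2/(pq)$ admitting a Virasoro singular vector is strictly less than $1$; hence, by the classification recalled in Remark \ref{Rch}(1), $V_\omega$ has no singular vectors in any degree, and
\[
\dim_*(V_\omega,q)=\frac{1}{\prod_{i\ge 2}(1-q^i)}=1+q^2+q^3+2q^4+2q^5+4q^6+4q^7+7q^8+\cdots .
\]
On the other side, $\dim_*(L_\mathfrak{g}(1,0)^N,q)$ is furnished by the Baker--Tamanoi formula recorded in \eqref{Eq:D4}.

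The comparison is then immediate from the tabulated coefficients. For $A_1$ the two series coincide identically, giving $L_\mathfrak{g}(1,0)^N=V_\omega$ and hence class $\Sn^\infty$. For $A_2$, $D_4$, $E_6$ and $E_7$ the series in \eqref{Eq:D4} first exceeds $\dim_*(V_\omega,q)$ in degrees $3$, $4$, $5$ and $6$ (coefficients $2>1$, $4>2$, $3>2$ and $5>4$, respectively), yielding exactly class $\Sn^2$, $\Sn^3$, $\Sn^4$ and $\Sn^5$.

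The only case requiring more than what is displayed is $E_8$, where the two series agree through degree $6$. To establish $\Sn^7$ I must confirm that the degree-$7$ coefficients still agree (both equal $4$) while the degree-$8$ coefficients differ, i.e.\ $\dim(L_{E_8}(1,0)^N)^8>7$. I expect this to be the main point of the argument: it is the one place where the expansion listed in \eqref{Eq:D4} is insufficient and the explicit generating function of \cite{bt} must be pushed two degrees further. The extension itself is a routine but careful series expansion.
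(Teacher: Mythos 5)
Your proposal is correct and takes essentially the same route as the paper, whose entire proof is the comparison of Remark \ref{Rch} (2) with the Baker--Tamanoi series \eqref{Eq:D4}; your added justifications (the inclusion $V_\omega\subseteq L_\Fg(1,0)^N$ forcing equality when graded dimensions agree, and the absence of singular vectors since $c=1,2,4,6,7,8\ge 1$ exceeds every minimal-model central charge) are exactly what the paper leaves implicit. The one step you defer---checking that the degree-$7$ coefficient for $E_8$ equals $4$---is likewise not displayed in \eqref{Eq:D4} but is part of the explicit formula of \cite{bt} on which the paper relies, so your treatment matches the paper's.
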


\begin{rem} If the type of $\mathfrak{g}$ is $A_n$ $(n\ge1)$ and $D_n$ $(n\ge4)$, then $L_\mathfrak{g}(1,0)$ is of class $\Sn^2$ and $\Sn^3$ under $N(L_\mathfrak{g}(1,0))$, respectively.
\end{rem}

Let us prove that $L_\Fg(1,0)$ is of class $\Sn^5$ if the type of $\Fg$ is $A_2,G_2,D_4$ or $F_4$.

\begin{prop}\label{PA2D4} If the type of $\mathfrak{g}$ is $A_2$ or $D_4$, then $L_\Fg(1,0)$ is of class $\Sn^5$.
\end{prop}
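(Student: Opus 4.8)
The plan is to prove Proposition~\ref{PA2D4} by exploiting the extra symmetry coming from Dynkin diagram automorphisms, which is exactly what distinguishes the $A_2$ and $D_4$ cases from the computation in \cite{bt} that only handled the inner automorphism group $N=N(L_\Fg(1,0))$. By Proposition~\ref{PSn}, we already know $L_\Fg(1,0)^N$ has graded dimension beginning $1+q^2+2q^3+\cdots$ for $A_2$ and $1+q^2+q^3+4q^4+\cdots$ for $D_4$, so $L_\Fg(1,0)$ is \emph{not} of class $\Sn^5$ under $N$ alone. The point is that $\Aut L_\Fg(1,0)$ is strictly larger than $N$: the inner automorphism group $N$ acts on $V^1\cong\Fg$ as the adjoint (inner) group, but the Dynkin diagram automorphism of $\Fg$ (of order $2$ for $A_2$, of order $6$, i.e.\ $S_3$, for $D_4$) lifts to an automorphism of the VOA $L_\Fg(1,0)$ normalizing $N$. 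The strategy is to compute the fixed-point graded dimension of the \emph{full} group $G=\langle N,\sigma\rangle$ for $\sigma$ running over these diagram automorphisms, and show it agrees with $\dim_*(V_\omega,q)$ through degree $5$.

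First I would set up the averaging. Since $V^G=(V^N)^{\langle\sigma\rangle}$, I would take the already-known $N$-invariant subspace $V^N=L_\Fg(1,0)^N$ and compute the action of $\sigma$ on each graded piece $(V^N)^i$ for $i\le 5$. Because $\sigma$ is a finite-order automorphism, the dimension of the $\sigma$-fixed subspace of $(V^N)^i$ is the average $\tfrac{1}{|\langle\sigma\rangle|}\sum_{k}\Tr_{(V^N)^i}\sigma^k$, so what I really need is the graded trace of each power of $\sigma$ on $V^N$ up to degree $5$. These traces can be extracted from the character/string-function machinery: the diagram automorphism permutes the weight spaces of $L_\Fg(1,0)$ according to its action on the weight lattice of $\Fg$, so its graded trace is a twisted version of the graded dimension formula of \cite{kp} that was already used to get \eqref{Eq:D4}. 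Concretely, I would write the $N$-invariant piece in terms of the basic data (the lattice $Q$, the level-one theta functions and the $\eta$-type denominator) and then insert the permutation action of $\sigma$ on $Q$ to obtain the twisted trace.

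The key computation is then purely bookkeeping in low degree. I would tabulate, for each $i\in\{2,3,4,5\}$, a basis of $(V^N)^i$ adapted to the $\sigma$-action and read off the fixed dimension; the target is to see the degree-$3$ multiplicity drop from $2$ to $1$ (for $A_2$) and the degree-$4$ multiplicity drop from $4$ to $2$ (for $D_4$), matching the coefficients $1,q^2,q^3,2q^4,2q^5$ of $\dim_*(V_\omega,q)$ from Remark~\ref{Rch}~(2). Equivalently, the nontrivial diagram-automorphism-invariant vectors in $V^N$ that are not in $V_\omega$ must all be pushed up beyond degree $5$. Once the graded dimensions coincide through degree $5$, combined with the general fact $V_\omega^i\subseteq (V^G)^i$ (since $\omega$ is fixed by every automorphism), I conclude $(V^G)^i=V_\omega^i$ for $0\le i\le 5$, which is precisely the statement that $L_\Fg(1,0)$ is of class $\Sn^5$.

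The main obstacle I anticipate is correctly computing the twisted graded traces of the diagram automorphism, rather than merely its fixed-dimension on $V^1$. The inner data from \cite{bt} gives the untwisted graded dimension, but lifting the outer automorphism $\sigma$ to $L_\Fg(1,0)$ and controlling its action on the higher graded components (degrees $3$, $4$, $5$) requires care: one must track how $\sigma$ acts on products $x_{(-n_1)}\cdots x_{(-n_k)}\1$ of $V^1$-elements, and in the $D_4$ case handle the full triality group $S_3$ rather than a single involution, so that the averaging over $\langle\sigma\rangle$ (equivalently over $\mathrm{Out}(\Fg)$) is done over the right group. I expect this to reduce, after choosing a weight basis diagonalizing $\sigma$, to a finite and checkable linear-algebra computation in each low degree, but it is the step where an error in the twisted character is most likely to creep in.
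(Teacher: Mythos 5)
Your key idea --- that $N$ alone does not suffice and the Dynkin diagram (outer) automorphisms must be brought in to cut $V^N$ down to $V_\omega$ --- is exactly the idea of the paper's proof, and your logical skeleton ($V^G=(V^N)^{G/N}$, then match graded dimensions with $\dim_*(V_\omega,q)$ through degree $5$) is also the paper's. The genuine gap is that the entire mathematical content lies in determining the outer action on $(V^N)^i$ for $i\le 5$, and neither of your two suggested routes, as stated, delivers it. First, the graded trace of a diagram automorphism on the multiplicity space $(V^N)^i$ of the trivial $\Fg$-module is \emph{not} obtained by ``inserting the permutation action of $\sigma$ on $Q$'' into the formula of \cite{kp} behind \eqref{Eq:D4}: the twisted analogue of that formula involves so-called twining characters, governed by folded (orbit) affine Lie algebras, which is a genuinely different and heavier computation than a substitution into the untwisted formula. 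Second, ``tabulating a basis of $(V^N)^i$ adapted to the $\sigma$-action'' presupposes an explicit description of $(V^N)^i$ inside $V^i$ that you never supply; this is subtler than it looks, since for $D_4$ in degree $4$ the space $(V^N)^4$ is a \emph{proper} subspace (of codimension one) of the obvious ambient space of Weyl-group invariants, so one cannot simply take invariants of a visible space and be done.

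The paper resolves both points by using that $A_2$ and $D_4$ are simply laced, so $L_\Fg(1,0)\cong V_L$ for the root lattice $L$: then $V^K=S(\h_\Z^-)$ for $K=\langle\exp a_{(0)}\mid a\in\h(-1)\rangle$, hence $V^N\subseteq S(\h_\Z^-)^W$, and the full group $\Aut L$ (Weyl group together with diagram automorphisms) acts on $S(\h_\Z^-)$ by explicit linear substitutions (\cite[\S 10.4]{flm}), making every invariant computation finite and elementary. Even then the paper does not brute-force degrees $4$ and $5$. For $A_2$ it computes only $(S(\h_\Z^-)^{\Aut L})^3=V_\omega^3$, then takes a Virasoro highest-weight vector $v\in(V^N)^3$ orthogonal to $V_\omega$, deduces from $|G/N|=2$ that some $g\in G\setminus N$ acts by $-1$ on the $V_\omega$-module $M$ generated by $v$, and uses the character of $M$ from \cite[Proposition 8.2 (b)]{KR} to account for all of the excess of $(V^N)^i$ over $V_\omega^i$ in degrees $4$ and $5$. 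For $D_4$ it exhibits a $2$-dimensional triality-irreducible subspace $X\subset(V^N)^4$, obtains $(V^N)^4=X\oplus V_\omega^4$ by a dimension count against \eqref{Eq:D4}, and settles degree $5$ by injectivity of $L(-1)$. To salvage your plan, adopt the lattice model so that your bookkeeping becomes feasible and borrow one of these structural shortcuts; the twining-character route is viable in principle but is a far longer road than your proposal suggests.
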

\begin{proof} Let $L$ be the root lattice of $\Fg$.
Set $\h=\C\otimes_{\Z} L$.
Since $L_\Fg(1,0)$ is isomorphic to the lattice VOA $V_L\cong S(\h_\Z^-)\otimes \C[L]$ associated to $L$, it suffices to show that $V_L$ is of class $\Sn^5$.
For the details of $V_L$, see \cite{flm}.
For convenience, we omit $\1=1\otimes e^0$ in the description of elements in $S(\h_\Z^-)=S(\h_\Z^-)\otimes e^0$.
Set $V=V_L$, $G=\Aut V$ and $N=N(V)$.
Let $K$ be the subgroup of $N$ defined by $K=\langle \exp a_{(0)}\mid a\in \h(-1)\rangle$.
Notice that for $a\in \h$ and $\beta\in L$, $\exp(a(-1)_{(0)})$ acts on $S(\h_\Z^-)\otimes e^\beta$ as the scalar multiple by $\exp{(a,\beta)}$, where $(,)$ is the inner product of $\h$.
Since $(,)$ is non-degenerate, we obtain $$V^{K}= S(\h_\Z^-)={\rm Span}_{\C}\{h_1(-n_1)\cdots h_t(-n_t)\mid h_i\in\h, n_1\ge\dots\ge n_t\ge1\}.$$
Recall from \cite[\S 10.4]{flm} that $G$ contains a subgroup which acts on $S(\h_\Z^-)$ as $\Aut L$.
More precisely, for $g\in\Aut L$, there exists an element in $\Aut V$ which acts on $S(\h_\Z^-)$ by $$h_1(-n_1)h_2(-n_2)\cdots h_t(-n_t)\mapsto(gh_1)(-n_1)(gh_2)(-n_2)\cdots (gh_t)(-n_t).$$

\paragraph{Case 1: the type of $\Fg$ is $A_2$.}
By Proposition \ref{PSn}, $V$ is of class $\Sn^2$ under $N$.
Let $\{\alpha_1,\alpha_2\}$ be a set of simple roots of the root lattice $L$ of type $A_2$.
Recall that $\Aut L$ is generated by $-1$, $\tau$, and $\mu$, where $\tau(\alpha_1)=\alpha_2$, $\tau(\alpha_2)=\alpha_1$, and $\mu(\alpha_1)=\alpha_1$, $\mu(\alpha_2)=-\alpha_1-\alpha_2$.

First, let us show that $(S(\h_\Z^-)^{\Aut L})^3=V_\omega^3$.
Notice that $(S(\h_\Z^-)^{\langle-1\rangle})^3=\h(-2)\h(-1)$.
By the action of $\tau$, we have  $$(S(\h_\Z^-)^{\langle-1,\tau\rangle})^3=\C\left(\sum_{i=1}^2\alpha_i(-2)\alpha_i(-1)\right)\oplus\C\left(\sum_{\{i,j\}=\{1,2\}}\alpha_i(-2)\alpha_j(-1)\right).$$
In addition, considering the action of $\mu$, we have  
\begin{align*}
(S(\h_\Z^-)^{\Aut L})^3=\C\left(\sum_{i=1}^2\alpha_i(-2)\alpha_i(-1)+\frac{1}{2}\left(\sum_{\{i,j\}=\{1,2\}}\alpha_i(-2)\alpha_j(-1)\right)\right) =V_\omega^3.
\end{align*}

Next, we prove that $V$ is of class $\Sn^5$.
Recall that $V_L$ has a real form on which the invariant form is positive-definite (\cite[Proposition 2.7]{Mi}).
Since $\dim (V^N)^2=\dim V_\omega^2$ and $\dim (V^N)^3>\dim V_\omega^3$, there exists a highest weight vector $v\in (V^N)^3$ for $V_\omega$ which is orthogonal to $V_\omega$ with respect to the invariant form.
Moreover, by $|G/N|=2$ and $\dim (V^N)^3-\dim V^3_\omega=1$, there exists $g\in G\setminus N$ such that $g(v)=-v$.
Let $M$ be the $V_\omega$-submodule generated by $v$.
Then $M\cap V_\omega=0$ and $g$ acts by $-1$ on $M$.
By \cite[Proposition 8.2 (b)]{KR}, $$\sum_{i=0}^\infty \dim M^iq^i=q^3+q^4+2q^5+\cdots.$$
Hence by \eqref{Eq:D4}, $$\sum_{i=0}^\infty (\dim (V^{N})^i-\dim M^i)q^i=1+q^2+q^3+2q^4+2q^5+\cdots.$$
Clearly, $\dim (V^G)^i \le \dim (V^{N})^i-\dim M^i$ for all $i$.
It follows from Remark \ref{Rch} (2) that $(V^G)^i=V_\omega^i$ for $0\le i\le 5$, that is, $V$ is of class $\Sn^5$.

\medskip

\paragraph{Case 2: the type of $\Fg$ is $D_4$.}
By Proposition \ref{PSn}, $V$ is of class $\Sn^3$.
Let $\{e_1,e_2,e_3,e_4\}$ be an orthonormal basis of $\R^4$ and let $L$ be the root lattice of type $D_4$.
We use the standard description $L=\{\sum_{i=1}^4x_ie_i\mid \sum_{i=1}^4x_i\in2\Z\}$.
Notice that the Weyl group $W$ of $L$ is the semi-direct product $E:S_4$, where $E\cong 2^3$ is the group of sign changes involving only even numbers of signs of the set $\{e_1,e_2,e_3,e_4\}$ and $S_4$ is the permutation group on coordinates.
One can see that
\begin{align*}
(S(\h_\Z^-)^{E})^4=&\bigoplus_{1\le i\le j\le 4}\C e_i(-1)^2e_j(-1)^2\oplus\bigoplus_{i=1}^4\C e_i(-3)e_i(-1)\oplus\bigoplus_{i=1}^4 \C e_i(-2)^2\\
&\oplus\C e_1(-1)e_2(-1)e_3(-1)e_4(-1).
\end{align*}
Considering the action of $S_4\subset W$, we have 
\begin{align*}
(S(\h_\Z^-)^W)^4=&\C\sum_{i=1}^4 e_i(-1)^4\oplus\C\sum_{1\le i<j\le 4} e_i(-1)^2e_j(-1)^2\oplus\C\sum_{i=1}^4e_i(-3)e_i(-1)\\
&\oplus\C\sum_{i=1}^4e_i(-2)^2\oplus\C e_1(-1)e_2(-1)e_3(-1)e_4(-1).
\end{align*}
Hence $\dim (S(\h_\Z^-)^W)^4=5$.

We fix a set of simple roots $\{e_1-e_2,\ e_2-e_3,\ e_3\pm e_4\}$.
Let $H$ be the subgroup of $\Aut L$ generated by the Dynkin diagram automorphisms.
Then $\Aut L=W:H$ and $H\cong S_3$.
Note that $H$ is generated by the following two involutions $\nu$ and $\sigma$:
\begin{align*}
\nu(e_i)&=(-1)^{\delta_{i4}}e_i\quad (1\le i\le 4),\\
\sigma(e_1)&=\frac{1}{2}(e_1+e_2+e_3-e_4),\quad \sigma(e_2)=\frac{1}{2}(e_1+e_2-e_3+e_4),\\
\sigma(e_3)&=\frac{1}{2}(e_1-e_2+e_3+e_4),\quad \sigma(e_4)=\frac{1}{2}(-e_1+e_2+e_3+e_4).
\end{align*}
One can see that  \begin{align*}
(S(\h_\Z^-)^{\Aut L})^4=&\C\sum_{i=1}^4e_i(-3)e_i(-1)\oplus\C\sum_{i=1}^4e_i(-2)^2\\
&\oplus\C\left(\sum_{i=1}^4e_i(-1)^4+2\sum_{1\le i<j\le 4}e_i(-1)^2e_j(-1)^2\right)
\end{align*}
and that the following $2$-dimensional subspace $X$ of $(S(\h_\Z^-)^W)^4$ is irreducible for $H$:
$$
X=\C\left(\sum_{i=1}^4e_i(-1)^4-2\sum_{1\le i<j\le 4}e_i(-1)^2e_j(-1)^2\right)\oplus\C e_1(-1)e_2(-1)e_3(-1)e_4(-1).$$

Since $N$ contains a subgroup induced from $W$, we have $(V^{N})^4\subset (S(\h_\Z^-)^W)^4$.
Moreover, by $G/N\cong H$ (cf.\ \cite[\S 16.5]{Hu} and \cite[Theorem 2.1]{DN}), $(V^{N})^4$ is a submodule of $(S(\h_\Z^-)^W)^4$ for $H$.
Recall from \eqref{Eq:D4} that $\dim (V^N)^4=4$.
Hence $\dim (S(\h_\Z^-)^W)^4=\dim(V^N)^4+1$.
It follows from the irreducibility of $X$ for $H$ and $\dim X=2$ that $X$ must be contained in $(V^N)^4$.
Clearly, $X\cap (V^G)^4=0$ and $\dim V_\omega^4=2$.
Comparing the dimensions, we obtain $(V^{N})^4=X\oplus V_\omega^4$, and hence $(V^G)^4=V_\omega^4$.
Moreover, by \eqref{Eq:D4} and Remark \ref{Rch} (2), we have $\dim V_\omega^5=\dim V_\omega^4$ and $\dim (V^N)^4=\dim (V^N)^5$.
Since $L(-1)$ is injective, we also have $L(-1)X\oplus V_\omega^5= (V^{N})^5$.
Thus $(V^G)^5=V_\omega^5$, and $V$ is of class $\Sn^5$.
\end{proof}

Let us consider the remaining case where the type of $\mathfrak{g}$ is $F_4$ or $G_2$.
We prove the following lemma needed later:

\begin{lem}\label{Lrho} Let $\Phi$ be an indecomposable root system  and let $\rho$ be half the sum of positive roots with respect to fixed simple roots.
Let $W$ be the Weyl group and $w\in W$.
\begin{enumerate}[{\rm (1)}]
\item $|\rho-w(\rho)|=|\rho-w^{-1}(\rho)|$.
\item If a simple reflection $r\in W$ satisfies $l(rw)>l(w)$, then $|\rho-rw(\rho)|>|\rho-w(\rho)|$, where $l(w)$ is the length of $w$. 
\end{enumerate}
\end{lem}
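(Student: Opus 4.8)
The plan is to reduce both parts to a comparison of inner products of the form $(\rho,u(\rho))$, using that every $w\in W$ acts as an orthogonal transformation, so that $|w(\rho)|=|\rho|$. Expanding the squared norm gives
\[
|\rho-u(\rho)|^2=|\rho|^2-2(\rho,u(\rho))+|u(\rho)|^2=2|\rho|^2-2(\rho,u(\rho)),
\]
so comparing lengths $|\rho-u(\rho)|$ amounts to comparing the numbers $(\rho,u(\rho))$, with a larger length corresponding to a smaller inner product.

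For (1) I would apply this to $u=w$ and to $u=w^{-1}$; it then suffices to prove $(\rho,w(\rho))=(\rho,w^{-1}(\rho))$, which is immediate from orthogonality since $(\rho,w^{-1}(\rho))=(w(\rho),w(w^{-1}(\rho)))=(w(\rho),\rho)$.

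For (2) the same reduction shows that $|\rho-rw(\rho)|>|\rho-w(\rho)|$ is equivalent to $(\rho,w(\rho))-(\rho,rw(\rho))>0$. Write $r=r_\alpha$ for the simple root $\alpha$. The reflection formula together with $2(\rho,\alpha)/(\alpha,\alpha)=1$ (valid because $\alpha$ is simple) gives $r_\alpha(\rho)=\rho-\alpha$, so by orthogonality
\[
(\rho,rw(\rho))=(r(\rho),w(\rho))=(\rho-\alpha,w(\rho))=(\rho,w(\rho))-(\alpha,w(\rho)).
\]
Hence the quantity to be estimated equals $(\alpha,w(\rho))=(w^{-1}(\alpha),\rho)$. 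I would then invoke the standard length criterion for a simple reflection, namely $l(r_\alpha w)>l(w)\iff w^{-1}(\alpha)\in\Phi^+$; combined with the fact that $\rho$ is regular dominant, so that $(\beta,\rho)>0$ for every positive root $\beta$, applied to $\beta=w^{-1}(\alpha)$, this yields $(\alpha,w(\rho))>0$, which is exactly the desired inequality.

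All of these steps are routine linear algebra plus two standard facts about root systems. I expect the only real point to get right is the direction of the length criterion (that $l(r_\alpha w)>l(w)$ corresponds to $w^{-1}(\alpha)$ being positive, not $w(\alpha)$), together with the elementary identities $r_\alpha(\rho)=\rho-\alpha$ and $(\beta,\rho)>0$ for $\beta\in\Phi^+$. Note that indecomposability of $\Phi$ plays no essential role in the argument; it is presumably assumed only because the lemma is applied to the irreducible systems of type $F_4$ and $G_2$.
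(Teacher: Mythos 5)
Your proposal is correct and follows essentially the same route as the paper: both arguments reduce to the identity $|\rho-rw(\rho)|^2-|\rho-w(\rho)|^2=2(w^{-1}(\alpha),\rho)$ via orthogonality of $W$, the identity $r(\rho)=\rho-\alpha$ for a simple root $\alpha$, and the length criterion that $l(rw)>l(w)$ forces $w^{-1}(\alpha)$ to be a positive root, whence $(w^{-1}(\alpha),\rho)>0$. The only cosmetic difference is that you expand both squared norms and compare the inner products $(\rho,w(\rho))$ and $(\rho,rw(\rho))$, whereas the paper applies $r$ to the vector $\rho-w(\rho)$ and expands once; these are the same computation rearranged.
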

\begin{proof} (1) follows from $|\rho-w(\rho)|=|w^{-1}(\rho)-\rho|$.
Let $\beta$ be the simple root corresponding to $r$ and $(\cdot,\cdot)$ the inner product.
By $r(\rho)=\rho-\beta$ and $(\rho,\beta)=(\beta,\beta)/2$, we have
\begin{align*}
|\rho-w(\rho)|^2&=|\rho-rw(\rho)-\beta|^2=|\rho-rw(\rho)|^2+|\beta|^2-2(\rho-rw(\rho),\beta)\\
&=|\rho-rw(\rho)|^2+|\beta|^2-2(w^{-1}(\beta),\rho)-|\beta|^2=|\rho-rw(\rho)|^2-2(w^{-1}(\beta),\rho).
\end{align*}
By $l(rw)>l(w)$ and \cite[Corollary of Lemma C in \S 10.2]{Hu} $w^{-1}r(\beta)$ is a negative root, and hence $w^{-1}(\beta)$ is a positive root.
Thus $(w^{-1}(\beta),\rho)>0$.
\end{proof}

\begin{prop}\label{PSn2}If the type of $\mathfrak{g}$ is $G_2$ or $F_4$, then $L_{\Fg}(1,0)$ is of class $\Sn^5$ under $N(L_{\Fg}(1,0))$.
\end{prop}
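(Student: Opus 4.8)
The plan is to compute the graded dimension of $L_\Fg(1,0)^N$ directly and compare it with $\dim_*(V_\omega,q)$ from Remark \ref{Rch}~(2), as in Proposition \ref{PA2D4} but now without invoking outer automorphisms. Write $V=L_\Fg(1,0)$ and $N=N(V)$. Since $N$ is generated by the $\exp(a_{(0)})$ with $a\in\Fg=V^1$, and the zero modes $a_{(0)}$ $(a\in\Fg)$ generate the action of the horizontal subalgebra $\Fg$ on $V$, the group $N$ acts on each finite-dimensional component $V^n$ as a connected group integrating the $\Fg$-action. Hence $(V^N)^n$ is exactly the space of $\Fg$-invariants in $V^n$, and $\dim(V^N)^n$ equals the multiplicity of the trivial $\Fg$-module in $V^n$. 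As $V_\omega\subseteq V^N$ always holds, it suffices to show that this multiplicity equals $\dim V_\omega^n$ for $0\le n\le 5$; by Remark \ref{Rch}~(2) these target values are $1,0,1,1,2,2$.

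First I would rewrite the multiplicity by the standard consequence of Weyl's character formula: for a finite-dimensional $\Fg$-module $M$, the multiplicity of the trivial submodule equals $\sum_{w\in W}(-1)^{l(w)}\dim M_{w(\rho)-\rho}$, where $W$ is the Weyl group and $M_\mu$ is the $\mu$-weight space. Applied to $M=V^n$ this yields
$$\dim(V^N)^n=\sum_{w\in W}(-1)^{l(w)}\dim (V^n)_{w(\rho)-\rho}.$$
The weight multiplicities $\dim(V^n)_\mu$ of the basic representation $V=L_\Fg(1,0)$ are governed by its string functions (\cite{ka,kp}). In particular, every weight $\mu$ occurring in $V^n$ satisfies $(\mu,\mu)\le 2n$, normalizing so that $(\alpha,\alpha)=2$ for long roots $\alpha$; hence $\dim (V^n)_{w(\rho)-\rho}=0$ whenever $|\rho-w(\rho)|^2>2n$, and the string functions supply the multiplicities in the remaining range.

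This is where Lemma \ref{Lrho} enters. By part~(2), $|\rho-w(\rho)|$ strictly increases along reduced expressions, so for the bound $2n\le 10$ only finitely many $w\in W$ contribute, and these can be listed by increasing length, beginning with the identity ($\mu=0$, contributing the principal string function) and the simple reflections ($\mu=-\beta$ a negative simple root); part~(1) further pairs $w$ with $w^{-1}$, which contribute at the same grade with the same sign. For $\Fg=G_2$ the Weyl group has order $12$, so the enumeration is immediate; for $\Fg=F_4$ it has order $1152$, and the length-monotonicity of Lemma \ref{Lrho} is precisely what keeps the list of contributing $w$ manageable. For each such $w$ I would read off the needed string-function coefficients, up to order $5-|\rho-w(\rho)|^2/2$, from the tables in \cite{ka,kp}, assemble the signed sum, and check that $\dim(V^N)^n=\dim V_\omega^n$ for $0\le n\le 5$. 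Since $V_\omega^n\subseteq(V^N)^n$, equality of dimensions forces $(V^N)^n=V_\omega^n$ for $0\le n\le 5$, i.e.\ $V$ is of class $\Sn^5$ under $N$.

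The main obstacle is the bookkeeping for $F_4$: one must determine every $w\in W$ with $|\rho-w(\rho)|^2\le 10$---Lemma \ref{Lrho} is the tool that bounds this search---and then combine the correct string-function values with their signs and grade shifts. The $G_2$ computation is small, but in both cases care is needed with the non-simply-laced normalization, since short and long weights $w(\rho)-\rho$ sit at different grades and are controlled by different string functions.
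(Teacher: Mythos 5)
Your proposal is correct and takes essentially the same route as the paper: the degree-by-degree alternating Weyl-group sum you derive from Weyl's character formula is precisely the Kac--Peterson graded-dimension formula \eqref{Formula:fixed} that the paper invokes, with string functions supplying the weight multiplicities, Lemma \ref{Lrho} bounding the enumeration of contributing $w$, and the result compared against $\dim_*(V_\omega,q)$ from Remark \ref{Rch}~(2). The only cosmetic difference is that you repackage the generating-function identity as a per-degree invariant-multiplicity computation and justify finiteness via the bound $|\mu|^2\le 2n$ on weights of $V^n$, rather than by truncating the $q$-expansion.
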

\begin{proof} Set $V=L_{\Fg}(1,0)$ and $N=N(V)$.
We fix a Cartan subalgebra $\mathfrak{h}$ of $V^1(\cong\mathfrak{g})$ and a set of simple roots.
Let $(\ ,\ )$ be the inner product on $\mathfrak{h}^*$ normalized so that $|\alpha|^2=(\alpha,\alpha)=2$ for a long root $\alpha$.
We view $V$ as a basic module $L(\Lambda_0)$ for the affine Lie algebra of $\Fg$ (see \cite{ka} for the notations $\Lambda_0$ etc.).
Then $V^N$ is a sum of trivial $\mathfrak{g}$-submodules of $L(\Lambda_0)$.
It follows from \cite[(4.39)]{kp} (see also \cite[Exercise 12.17]{ka}) that
\begin{align}
\dim_*(V^N,q)= q^{-s}\sum_{w\in W}\varepsilon(w)q^{\frac{|\rho-w(\rho)|^2}{2}}c^{\Lambda_{0}}_{\Lambda_0+\rho-w(\rho)},\label{Formula:fixed}
\end{align}
where $s=\frac{-|\rho|^2}{2(1+h^\vee)h^\vee}$, $\rho$ is half the sum of all positive roots,  $c^{\Lambda_{0}}_{\Lambda_0+\rho-w(\rho)}$ is the string function defined in \cite[(12.7.7)]{ka} and $\varepsilon(w)=(-1)^{l(w)}$.

\medskip

\paragraph{Case 1: $\mathfrak{g}$ is of type $G_2$.}
Remark that $h^\vee=4$, $|\rho|^2=14/3$ and $s=-7/60$.
Let $\Lambda_2$ be a short root.
It follows from \cite[(12.7.9)]{ka} that for an element $\alpha$ in the root lattice, $$c_{\Lambda_0+\alpha}^{\Lambda_0}=\begin{cases}c_{\Lambda_0}^{\Lambda_0}& {\rm if}\ |\alpha|^2\in2\Z,\\ c^{\Lambda_0}_{\Lambda_2}& 
{\rm if}\ |\alpha|^2\in2/3+2\Z.\end{cases}$$
By Lemma \ref{Lrho}, one can describe all elements $w$ in the Weyl group $W(G_2)$ satisfying $|\rho-w(\rho)|^2\le 12$ and obtain Table \ref{TG2}.

\begin{table}
\caption{Number of elements in $W(G_2)$ with ${|\rho-w(\rho)|^2}\le 12$}\label{TG2}
\begin{center}
\begin{tabular}{|c|c|c|c|c|c|c|} 
\hline 
$({|\rho-w(\rho)|^2},\varepsilon(w))$& $(0,1)$&$(2/3,-1)$&$(2,-1)$&$(14/3,1)$&$(8,-1)$&$(32/3,-1)$\\ \hline
number of elements $w$ & $1$&$1$&$1$&$2$&$1$&$1$\\ \hline
\end{tabular}
\end{center}
\end{table}

In \cite[Example 6 in \S 4.6]{kp} the string functions for $G_2^{(1)}$ are described explicitly as
\begin{align*}
c^{\Lambda_{0}}_{\Lambda_2}&=\eta(q)^{-3}q^{\frac{27}{40}}\prod_{i\not\equiv\pm2\pmod5}(1-q^{3i})\\
&=q^{\frac{-7}{60}+\frac{2}{3}}(1+3q+9q^{2}+21q^{3}+48q^{4}+99q^{5}+\cdots),\\
c^{\Lambda_0}_{\Lambda_0}&=c^{\Lambda_{0}}_{\Lambda_2}+\eta(q)^{-3}q^{\frac{1}{120}}\prod_{i\not\equiv\pm1\pmod5}(1-q^{\frac{i}{3}})\\
&=q^{\frac{-7}{60}}(1+2q+6q^2+14q^3+32q^{4}+66q^5+135q^6+\cdots),
\end{align*}
where $\eta(q)=q^{1/24}\prod_{i=1}^\infty(1-q^i)$ is the Dedekind eta function.
By \eqref{Formula:fixed} and Table \ref{TG2},
\begin{align*}
\dim_*(V^N,q)&\equiv q^{\frac{7}{60}}\left((1-q-q^4)c^{\Lambda_0}_{\Lambda_0}+(-q^{\frac{1}{3}}+2q^{\frac73}-q^{\frac{16}3})c^{\Lambda_0}_{\Lambda_2}\right)\pmod{q^7}\notag\\
&\equiv 1+q^2+q^3+2q^4+2q^5+5q^6\pmod{q^7}.
\end{align*}
Hence $V$ is of class $\Sn^5$ under $N$ by Remark \ref{Rch} (2).

\medskip

\paragraph{Case 2: $\mathfrak{g}$ is of type $F_4$.}
Remark that $h^\vee=9$, $|\rho|^2=39$ and $s=-13/60$.
Let $\Lambda_4$ be a short root.
It follows from \cite[(12.7.9)]{ka} that for an element $\alpha$ in the root lattice, $$c_{\Lambda_0+\alpha}^{\Lambda_0}=\begin{cases}c_{\Lambda_0}^{\Lambda_0}& {\rm if}\ |\alpha|^2\in2\Z,\\ c^{\Lambda_0}_{\Lambda_4}& 
{\rm if}\ |\alpha|^2\in1+2\Z.\end{cases}$$
By Lemma \ref{Lrho}, one can describe all elements $w$ in the Weyl group $W(F_4)$ satisfying $|\rho-w(\rho)|^2\le 10$ and obtain Table \ref{TF4}. 

\begin{table}
\caption{Number of elements in $W(F_4)$ with ${|\rho-w(\rho)|^2}\le 10$}\label{TF4}
\begin{center}
\begin{tabular}{|c|c|c|c|c|c|c|c|c|c|c|c|c|c|c|c|c|}\hline
$({|\rho-w(\rho)|^2},\varepsilon(w))$&$(0,1)$&$(1,-1)$&$(2,-1)$&$(3,1)$&$(4,-1)$&$(5,1)$&$(5,-1)$\\\hline
number of elements $w$ &$1$&$2$&$2$&$5$&$1$&$2$&$2$\\ \hline\hline
$({|\rho-w(\rho)|^2},\varepsilon(w))$&$(6,1)$&$(7,-1)$&$(8,-1)$&$(9,-1)$&$(9,1)$&$(10,1)$&\\\hline
number of elements $w$ &$3$&$4$&$2$&$4$&$1$&$2$&\\\hline
\end{tabular}
\end{center}
\end{table}

In \cite[Example 7 in \S 4.6]{kp} the string functions for $F_4^{(1)}$ are described explicitly as
\begin{align*}
c^{\Lambda_{0}}_{\Lambda_4}&=\eta(q)^{-6}\eta(q^2)q^{\frac{9}{20}}\prod_{i\not\equiv\pm2\pmod5}(1-q^{2i})\\
&=q^{-\frac{13}{60}+\frac{1}{2}}(1+6q+25q^2+86q^3+261q^4+\cdots),\\
c^{\Lambda_{0}}_{\Lambda_0}&=c^{\Lambda_{0}}_{\Lambda_4}+\eta(q)^{-6}\eta(q^{\frac12})q^{\frac{1}{80}}\prod_{i\not\equiv\pm1\pmod5}(1-q^{\frac{i}{2}})\\
&=q^{-\frac{13}{60}}(1+4q+17q^2+56q^3+172q^4+476q^5+\cdots).
\end{align*}
By \eqref{Formula:fixed} and Table \ref{TF4}, we obtain
\begin{align*}
\dim_*(V^N,q)\equiv&\ q^{\frac{13}{60}}\left((1-2q-q^2+3q^3-2q^4+2q^5)c_{\Lambda_0}^{\Lambda_0}\right.\\
&\left.+(-2q^{\frac{1}{2}}+5q^{\frac{3}{2}}-4q^{\frac{7}{2}}-3q^{\frac{9}{2}})c_{\Lambda_4}^{\Lambda_0}\right)\pmod{q^6}\\
 \equiv&\ 1+q^2+q^3+2q^4+2q^5\pmod{q^6}.
\end{align*}
Hence $V$ is of class $\Sn^5$ by Remark \ref{Rch} (2).
\end{proof}

\begin{note} We believe that $L_{\Fg}(1,0)$ is of class $\Sn^5$ if the type of $\Fg$ is $E_6$.
\end{note}

Combining Theorem \ref{PLie}, Propositions \ref{PSn}, \ref{PA2D4} and \ref{PSn2}, we obtain the following:

\begin{thm}\label{TMain5} Let $V$ be a $C_2$-cofinite, simple VOA of CFT-type with non-zero invariant bilinear form.
Assume that $V^1\neq0$ and that the central charge of $V$ is neither $0$, $-22/5$, nor $\dim V^1$.
Then $V$ is of class $\Sn^4$ under $\Aut V$ if and only if $V$ is isomorphic to one of the simple affine VOAs associated with the simple Lie algebras of type $A_1,A_2,G_2,D_4,F_4,E_6,E_7,E_8$ at level $1$.
\end{thm}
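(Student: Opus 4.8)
The plan is to read off both implications from the results already assembled in Sections 4 and 5, so that the proof reduces to bookkeeping together with one elementary monotonicity observation about the class $\Sn^n$ property.

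First I would treat the ``only if'' direction. Suppose $V$ is of class $\Sn^4$ under $\Aut V$, so that $(V^{\Aut V})^i=V_\omega^i$ for $0\le i\le 4$. Since the Casimir element $\kappa_4$ is fixed by every automorphism of $V$, it lies in $(V^{\Aut V})^4$, which by hypothesis equals $V_\omega^4$; hence $\kappa_4\in V_\omega^4$. All the standing hypotheses of Theorem \ref{PLie} ($C_2$-cofinite, simple, of CFT-type, $V^1\neq0$, and $c\notin\{0,-22/5,\dim V^1\}$) are in force, so Theorem \ref{PLie} applies verbatim and yields that $V$ is isomorphic to $L_\Fg(1,0)$ for $\Fg$ of one of the eight types $A_1,A_2,G_2,D_4,F_4,E_6,E_7,E_8$.

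For the ``if'' direction the key remark is that the class $\Sn^n$ property only improves when the group is enlarged: if $N\le\Aut V$ then $V^{\Aut V}\subseteq V^N$, while $\omega$ is fixed by all automorphisms, so for every $i$ one has $V_\omega^i\subseteq(V^{\Aut V})^i\subseteq(V^N)^i$. Consequently, if $L_\Fg(1,0)$ is of class $\Sn^n$ under $N=N(L_\Fg(1,0))$ with $n\ge4$, then it is \emph{a fortiori} of class $\Sn^4$ under the full group $\Aut L_\Fg(1,0)$. This immediately disposes of the types $A_1,E_6,E_7,E_8$ via Proposition \ref{PSn} (which gives class $\Sn^\infty,\Sn^4,\Sn^5,\Sn^7$ under $N$, respectively) and of the types $G_2,F_4$ via Proposition \ref{PSn2} (which gives $\Sn^5$ under $N$).

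The one place where care is genuinely needed --- and what I expect to be the only real subtlety --- is the pair $A_2$ and $D_4$: here Proposition \ref{PSn} yields only $\Sn^2$ and $\Sn^3$ under the inner group $N$, which is strictly below the required $\Sn^4$, so the monotonicity argument above does not suffice and the outer (Dynkin diagram) automorphisms must be brought in. This is exactly what Proposition \ref{PA2D4} supplies, establishing class $\Sn^5$ for these two types under the full automorphism group, and class $\Sn^5$ trivially implies class $\Sn^4$. With all eight types shown to be of class $\Sn^4$ under $\Aut V$, the two directions combine to give the stated equivalence.
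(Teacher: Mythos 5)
Your proposal is correct and follows essentially the same route as the paper: the ``only if'' direction is Lemma 2.2 (the $\Aut V$-invariance of $\kappa_4$) feeding into Theorem \ref{PLie}, and the ``if'' direction combines Propositions \ref{PSn}, \ref{PA2D4} and \ref{PSn2} exactly as the paper does, with your monotonicity remark (class $\Sn^n$ under a subgroup implies class $\Sn^n$ under any larger group, since $V_\omega\subseteq V^{\Aut V}\subseteq V^N$) being the implicit glue the paper leaves to the reader. You also correctly identify $A_2$ and $D_4$ as the cases where the inner automorphism group alone is insufficient, which is precisely why the paper needs Proposition \ref{PA2D4}.
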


\begin{cor} 
Under the same assumptions as in Theorem \ref{TMain5}, the following are equivalent:
\begin{enumerate}[{\rm (1)}]
\item $\kappa_4\in V_\omega^4$;
\item $V$ is of class $\Sn^4$.
\end{enumerate}
\end{cor}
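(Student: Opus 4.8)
The plan is to establish the two implications separately, assembling results already proved in this section; no new computation is needed. Throughout, the standing hypotheses are those of Theorem \ref{TMain5}: $V$ is $C_2$-cofinite, simple, of CFT-type, carries a non-zero invariant form, $V^1\neq0$, and $c\notin\{0,-22/5,\dim V^1\}$.

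First I would prove $(2)\Rightarrow(1)$ directly from the definitions. Recall that every Casimir element $\kappa_i$ is fixed by $\Aut V$, because $\Aut V$ preserves the invariant form; in particular $\kappa_4\in(V^{\Aut V})^4$. If $V$ is of class $\Sn^4$ under $\Aut V$, then by definition $(V^{\Aut V})^4=V_\omega^4$, whence $\kappa_4\in V_\omega^4$. This is exactly the content of the elementary lemma on Casimir elements in \S 2, specialized to $n=4$.

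Next I would prove $(1)\Rightarrow(2)$ by chaining two of the main results. Condition $(1)$ together with the standing hypotheses is precisely the hypothesis of Theorem \ref{PLie}, which identifies $V$ with one of the simple affine VOAs $L_{\mathfrak g}(1,0)$ for $\mathfrak g$ of type $A_1,A_2,G_2,D_4,F_4,E_6,E_7,E_8$ at level $1$. The reverse (``if'') direction of Theorem \ref{TMain5}---supplied by Propositions \ref{PSn}, \ref{PA2D4} and \ref{PSn2}---then shows that each such $L_{\mathfrak g}(1,0)$ is of class $\Sn^4$ (indeed $\Sn^5$ for several types), giving $(2)$.

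I do not anticipate a genuine obstacle: the corollary merely repackages the equivalence already implicit in Theorems \ref{PLie} and \ref{TMain5}, with the trivial direction carried by the fact that $\Aut V$-invariance places $\kappa_4$ in the fixed-point space. All the substantive input---the quartic trace formula of Theorem \ref{MT2}, the constraint \eqref{Eq:cd} together with Lemma \ref{Lem:cd}, and the string-function computations---has already been expended upstream, so here it suffices to cite it.
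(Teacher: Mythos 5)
Your proof is correct and is exactly the intended argument: the paper leaves the corollary without explicit proof because it follows immediately by chaining Lemma 2.2 (invariance of $\kappa_4$ under $\Aut V$ gives $(2)\Rightarrow(1)$) with Theorem \ref{PLie} and the ``if'' direction of Theorem \ref{TMain5} (Propositions \ref{PSn}, \ref{PA2D4}, \ref{PSn2}) for $(1)\Rightarrow(2)$. Your one implicit step---that class $\Sn^4$ under $N(V)$ or under $\Aut L$-type subgroups implies class $\Sn^4$ under $\Aut V$, since $V^{\Aut V}\subseteq V^H$ while $V_\omega\subseteq V^{\Aut V}$---is the same monotonicity the paper relies on, so nothing is missing.
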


\begin{rem} 
The trace formulae for the adjoint representation of simple Lie algebras of type  $A_1,A_2,G_2,D_4,F_4,E_6,E_7,E_8$, up to degree $4$, can be obtained by Propositions $\ref{MP}$ and $\ref{MT1}$ and Theorem $\ref{MT2}$.
\end{rem}

\appendix\section{Proof of Theorem \ref{MT2}}\label{SP2}
In this appendix, we prove Theorem \ref{MT2}, based on \cite[\S 2.2]{ma} (cf.\ \cite{Mar}).
We use many formulae (cf.\ \S 4.2 and \S 4.3 in \cite{man}) deduced from the Borcherds identity.
For example, we often refer the following formula:

\begin{lem}\label{LMT411}
For $a,x\in V$ and $q\in\Z$, the following holds:
$$   x_{(q)}a_{(0)}=(a_{(1)}x)_{(q-1)}+x_{(q-1)}a_{(1)}+a_{(0)}x_{(q)}-a_{(1)}x_{(q-1)}.$$
\end{lem}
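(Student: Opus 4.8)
The plan is to derive the identity as a purely formal consequence of the commutator formula for modes, namely
$$[u_{(m)},v_{(n)}]=\sum_{i\geq 0}\binom{m}{i}(u_{(i)}v)_{(m+n-i)},$$
which is the special case of the Borcherds identity recorded in \cite[(4.3.2)]{man} and already invoked in Lemma \ref{PCE}. No analytic input or structural hypothesis on $V$ is needed; the entire content is bookkeeping with two well-chosen instances of this formula, and the result will hold for all $a,x\in V$ and all $q\in\Z$.

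First I would evaluate the commutator formula at the index pair $(m,n)=(0,q)$. Since $\binom{0}{i}=0$ for $i\geq 1$, only the $i=0$ term survives, giving $[a_{(0)},x_{(q)}]=(a_{(0)}x)_{(q)}$. Next I would evaluate it at $(m,n)=(1,q-1)$: here $\binom{1}{i}$ vanishes for $i\geq 2$, so only the $i=0$ and $i=1$ terms contribute, yielding $[a_{(1)},x_{(q-1)}]=(a_{(0)}x)_{(q)}+(a_{(1)}x)_{(q-1)}$. These two relations are the only ingredients.

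The key step is to subtract the two relations so as to cancel the common term $(a_{(0)}x)_{(q)}$, which leaves $[a_{(1)},x_{(q-1)}]-[a_{(0)},x_{(q)}]=(a_{(1)}x)_{(q-1)}$. Expanding both commutators in terms of products of modes and solving the resulting linear relation for $x_{(q)}a_{(0)}$ produces exactly $(a_{(1)}x)_{(q-1)}+x_{(q-1)}a_{(1)}+a_{(0)}x_{(q)}-a_{(1)}x_{(q-1)}$, as claimed.

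There is no genuine obstacle here: the assertion is equivalent to a single instance of the commutator formula once the auxiliary term $(a_{(0)}x)_{(q)}$ has been eliminated. The only point requiring any care is the selection of the two index pairs $(0,q)$ and $(1,q-1)$, chosen precisely so that their difference isolates $(a_{(1)}x)_{(q-1)}$; once this is observed, the computation is immediate.
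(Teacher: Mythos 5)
Your proof is correct: the two instances $(m,n)=(0,q)$ and $(1,q-1)$ of the commutator formula give $[a_{(0)},x_{(q)}]=(a_{(0)}x)_{(q)}$ and $[a_{(1)},x_{(q-1)}]=(a_{(0)}x)_{(q)}+(a_{(1)}x)_{(q-1)}$, and subtracting and solving for $x_{(q)}a_{(0)}$ yields exactly the stated identity. This is essentially the paper's own route, since the paper states the lemma without proof as one of the formulae ``deduced from the Borcherds identity'' (citing \S 4.2--4.3 of \cite{man}), of which the commutator formula you use is the relevant special case.
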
 

Let $T$ denote the $0$-th operation of the conformal vector $\omega$, that is, $T=\omega_{(0)}=L(-1)$.
Notice that $T^nv/n!=v_{(-n-1)}\1$ for $v\in V$, $n\in\Z_{\ge0}$.
Set $$X_2=\frac{2d}{c},\quad X_3=\frac{d}{c},\quad X_4=\frac{3d(c+2)}{2c(5c+22)},\quad Y_4=\frac{12d}{c(5c+22)}.$$
By Lemma \ref{Lem:k234}, one can easily see $$\kappa_2=X_2\omega,\quad \kappa_3=X_3T\omega,\quad \kappa_4=X_4T^2\omega+Y_4\omega_{(-1)}\omega.$$

By the invariance of the bilinear form and Lemma \ref{LMT411},
\begin{equation}
  \begin{aligned}
  \label{Eq:MT42}
&\Tr_{V^1}\ a_{1(0)}a_{2(0)}a_{3(0)}a_{4(0)}\\
=&\sum_{i=1}^d(a_{1(0)}a_{2(0)}a_{3(0)}a_{4(0)}x^i|x_i)=-\sum_{i=1}^d(a_4|x^i_{(0)}a_{3(0)}a_{2(0)}a_{1(0)}x_i)\\
=&(a_{1(0)}a_3|a_{2(0)}a_{4})+(a_{1(0)}a_4|a_{2(0)}a_3)-\sum_{i=1}^d(a_4|a_{3(0)}x^i_{(0)}a_{2(0)}a_{1(0)}x_i)\\
&+\sum_{i=1}^d(a_4|a_{3(1)}x^i_{(-1)}a_{2(0)}a_{1(0)}x_i).
\end{aligned}
\end{equation}
By Proposition \ref{MT1}, the third term of (\ref{Eq:MT42}) is
\begin{align*}
&-\sum_{i=1}^d(a_4|a_{3(0)}x^i_{(0)}a_{2(0)}a_{1(0)})=-\sum_{i=1}^d(a_{4(0)}a_3|x^i_{(0)}a_{2(0)}a_{1(0)}x_i)\\
=&\left(\frac{d}{c}-1\right)(a_{1(0)}a_2|a_{3(0)}a_4).\label{Eq:MT421}
\end{align*}
Let us compute the forth term of (\ref{Eq:MT42}).
By Lemma \ref{LMT411},
\begin{equation}
\begin{aligned}
\label{Eq:MT422}
&\sum_{i=1}^d(a_4|a_{3(1)}x^i_{(-1)}a_{2(0)}a_{1(0)}x_i)=\sum_{i=1}^d(a_4|a_{3(1)}x^i_{(-2)}a_{2(1)}a_{1(0)}x_i)\\
&+\sum_{i=1}^d(a_4|a_{3(1)}a_{2(0)}x^i_{(-1)}a_{1(0)}x_i)-\sum_{i=1}^d(a_4|a_{3(1)}a_{2(1)}x^i_{(-2)}a_{1(0)}x_i).
\end{aligned}
\end{equation}
Let us calculate each term of the right hand side of (\ref{Eq:MT422}).
The first term of (\ref{Eq:MT422}) is
\begin{align*}
&\sum_{i=1}^d(a_4|a_{3(1)}x^i_{(-2)}a_{2(1)}a_{1(0)}x_i)=(a_4|a_{3(1)}Tx^i)(a_2|a_{(0)}x_i)\\
=&(a_{4}|a_{3(0)}a_{2(0)}a_1)=(a_{1(0)}a_2|a_{3(0)}a_4).
\end{align*}
By Lemma \ref{LMT411}, the second term of (\ref{Eq:MT422}) is
\begin{align*}
&\sum_{i=1}^d(a_4|a_{3(1)}a_{2(0)}x^i_{(-1)}a_{1(0)}x_i)\\
=&(a_4|a_{3(1)}a_{2(0)}Ta_1)+(a_4|a_{3(1)}a_{2(0)}a_{1(0)}\kappa_2)-(a_4|a_{3(1)}a_{2(0)}a_{1(1)}\kappa_3)\\ =&(a_4|a_{3(0)}a_{2(0)}a_{1})-X_3(a_4|a_{3(1)}a_{2(0)}Ta_1)=(1-X_3)(a_{1(0)}a_2|a_{3(0)}a_4).
\end{align*}
By Lemma \ref{LMT411}, the last term of (\ref{Eq:MT422}) is
\begin{align*}
&-\sum_{i=1}^d(a_4|a_{3(1)}a_{2(1)}x^i_{(-2)}a_{1(0)}x_i)\\
=&-(a_4|a_{3(1)}a_{2(1)}T^{(2)}a_1)-(a_4|a_{3(1)}a_{2(1)}a_{1(0)}\kappa_3)+(a_4|a_{3(1)}a_{2(1)}a_{1(1)}\kappa_4)\\ =&(2X_4-1)(a_{1(0)}a_2|a_{3(0)}a_4)+Y_4(a_4|a_{3(1)}a_{2(1)}a_{1(1)}\omega_{(-1)}\omega).
\end{align*}
By the commutator formula and the skew symmetry, the last term of the equation above is
\begin{align*}
&(a_4|a_{3(1)}a_{2(1)}a_{1(1)}\omega_{(-1)}\omega)\\
=&(a_4|a_{3(1)}a_{2(1)}\omega_{(-1)}a_{1(1)}\omega)+(a_4|a_{3(1)}a_{2(1)}a_{1(-1)}\omega)\\ =&2(a_4|a_{3(1)}a_{2(1)}\omega_{(-1)}a_{1})-(a_4|a_{3(1)}a_{2(1)}T^{2}a_1)/2\\ 
=&2(a_4|a_{3(1)}\omega_{(-1)}a_{2(1)}a_1)+2(a_4|a_{3(1)}a_{2(-1)}a_{1})-(a_4|a_{3(0)}a_{2(0)}a_1)\\ =&2(a_2|a_1)(a_4|a_3)-2(a_{1(0)}a_4|a_{2(0)}a_3)\\ &+2(a_3|a_2)(a_4|a_1)+2(a_4|a_2)(a_3|a_1)-(a_{1(0)}a_2|a_{3(0)}a_4).
\end{align*}
Therefore we obtain the following:
\begin{align*}
&\Tr_{V^1}\ a_{1(0)}a_{2(0)}a_{3(0)}a_{4(0)}
=\left(\frac{d}{c}-X_3+2X_4-Y_4+1\right)(a_{1(0)}a_{2}|a_{3(0)}a_4)\\ &\quad +(2-2Y_4)(a_{1(0)}a_4|a_{2(0)}a_3)+2Y_4\bigr((a_1|a_2)(a_3|a_4)+(a_1|a_3)(a_2|a_4)+(a_1|a_4)(a_2|a_3)\bigr)\\
=&\Bigl(1+\frac{3d(c-2)}{c(22+5c)}\Bigr)(a_{1(0)}a_{2}|a_{3(0)}a_4)+\Bigl(2-\frac{24d}{c(22+5c)}\Bigr)(a_{1(0)}a_4|a_{2(0)}a_3)\\
&\quad +\frac{24d}{c(22+5c)}\Bigl((a_1|a_2)(a_3|a_4)+(a_1|a_3)(a_2|a_4)+(a_1|a_4)(a_2|a_3)\Bigr).
\end{align*}

\section{Another proof of Theorem \ref{MT2}}
In this appendix, we sketch a proof of Theorem \ref{MT2} under the assumption that $V^1$ is a conformal $4$-design in a similar way as in \cite[\S 2.3]{ma}.

By using the associativity formula, we obtain
\begin{equation}
\begin{split}
 \Tr_{V^1}\ a_{1(0)}a_{2(0)}a_{3(0)}a_{4(0)}=&\Tr_{V^1}(a_{1(-1)}a_{2(-1)}a_{3(-1)}a_{4})_{(3)}\\ &+(a_{1(0)}a_2|a_{3(0)}a_4)+2(a_{1(0)}a_4|a_{2(0)}a_3),
\end{split}\label{Eq:CD1}
\end{equation}
which was mentioned in \cite[Lemma 5.2]{Hur}.
Let $\pi$ be the orthogonal projection from $V^4$ to $V_\omega^4$ with respect to the invariant form.
Set $v=a_{1(-1)}a_{2(-1)}a_{3(-1)}a_{4}$.
Then $v\in V^4$, and by the definition of the conformal $4$-design (\cite{Ho08}), 
\begin{equation}
\Tr_{V^1}\ v_{(3)}=\Tr_{V^1}\ \pi(v)_{(3)}.\label{Eq:CD23}
\end{equation}
Set $\pi(v)=Z_1L(-4)\1+Z_2L(-2)L(-2)\1$.
By $(v|u)=(\pi(v)|u)$ for all $u\in V_\omega^4$, one can directly show that 
\begin{equation}
\begin{split}
Z_1=& \frac{(c+14)}{c(5c+22)}(a_{1(0)}a_2|a_{3(0)}a_4)+\frac{12}{c(5c+22)}(a_{1(0)}a_4|a_{2(0)}a_3)\\
&-\frac{12}{c(5c+22)}((a_1|a_2)(a_3|a_4)+(a_1|a_3)(a_2|a_4)+(a_1|a_4)(a_2|a_3)),\\
Z_2=&\frac{-16}{c(5c+22)}(a_{1(0)}a_2|a_{3(0)}a_4)-\frac{20}{c(5c+22)}(a_{1(0)}a_4|a_{2(0)}a_3)\\
&+\frac{20}{c(5c+22)}((a_1|a_2)(a_3|a_4)+(a_1|a_3)(a_2|a_4)+(a_1|a_4)(a_2|a_3)).\\
\end{split}\label{Eq:CD2}
\end{equation}
In addition, one can check that 
\begin{align}
\Tr_{V^1}\ (L(-4)\1)_{(3)}=\Tr_{V^1}\ (L(-2)L(-2)\1)_{(3)}=3d.\label{Eq:CD3}
\end{align}
Combining \eqref{Eq:CD1}, \eqref{Eq:CD23}, \eqref{Eq:CD2} and \eqref{Eq:CD3}, we obtain Theorem \ref{MT2}.

\bigskip

\paragraph{\bf Acknowledgement.} The authors wish to thank Hisayoshi Matsumoto for helpful comments.
They also wish to thank the referee for useful comments and valuable suggestions.

\end{document}